\newtheorem{theorem}{Theorem}[section]
\newtheorem{lemma}[theorem]{Lemma}
\newtheorem{corollary}[theorem]{Corollary}
\theoremstyle{definition}
\newtheorem{definition}[theorem]{Definition}
\theoremstyle{remark}
\newtheorem{example}[theorem]{Example}
\numberwithin{equation}{section}
\newcommand {\D}  {{\mathcal D}}
\newcommand {\E}  {{\mathcal E}}
\newcommand {\N}  {{\mathcal N}}
\newcommand {\R}  {{\mathcal R}}
\newcommand {\FF}  {{\mathbb F}}
\newcommand {\QQ}  {{\mathbb Q}}
\newcommand {\ZZ}  {{\mathbb Z}}
\newenvironment{romanlist}
  {%
   \setlength{\topsep}{0pt}%
   \vspace{-\parskip}%
   \begin{enumerate}%
     \setlength{\parsep}{0pt}
     \setlength{\parskip}{0pt}%
  }%
  {\end{enumerate}%
   \vspace{-\parskip}}
\DeclareMathOperator{\lcm}{lcm}
\DeclareMathOperator{\Res}{Res}
\title{Number systems and the Chinese Remainder Theorem}
\author[C.~E.~van de Woestijne]{Christiaan~E.~van de Woestijne}
\address{Chair of Mathematics and Statistics, University of Leoben, A-8700 Leoben, AUSTRIA}
\email{c.vandewoestijne@unileoben.ac.at}
\dedicatory{Dedicated to Professor Attila Peth\H o on occasion of his 60th birthday}
\thanks{This research was supported by the Austrian Science Foundation (FWF),
project S9611, which is part of the national research network FWF-S96
``Analytic combinatorics and probabilistic number theory''.}
\date{\today}
\keywords{Canonical number system, Direct product, Fibred product}
\subjclass[2000]{11A63, 13F10, 13P15}
\begin{document}

\begin{abstract}
  A well-known generalisation of positional numeration systems is the case
  where the base is the residue class of $x$ modulo a given polynomial $f(x)$
  with coefficients in (for example) the integers, and where we try to
  construct finite expansions for all residue classes modulo $f(x)$, using a
  suitably chosen digit set. We give precise conditions under which direct or
  fibred products of two such polynomial number systems are again of the same
  form. The main tool is a general form of the Chinese Remainder Theorem. We
  give applications to simultaneous number systems in the integers.
\end{abstract}

\maketitle

\begin{section}{Introduction}

Digit systems are a generalisation of the everyday positional numeration
systems, such as the decimal or binary. The most general definition in an
Abelian context is as follows.

\begin{definition}
  A \emph{digit system} in an Abelian group $V$ is a triple $(V,\phi,\D)$, where
  $\phi:V\rightarrow V$ a homomorphism with finite cokernel, and $\D\subset V$
  a finite subset that covers all cosets of $V/\phi(V)$. If there are $d_1$ and
  $d_2\in \D$ such that $d_1\equiv d_2\pmod{\phi(V)}$, we call $\D$ and also the
  digit system $(V,\phi,\D)$ \emph{redundant}; if $\D$ exactly represents $V$
  modulo $\phi(V)$, both it and $(V,\phi,\D)$ are \emph{irredundant}.

  The digit system $(V,\phi,\D)$ has the Finite Expansion Property if every
  element $v\in V$ can be written in the form 
  $$
    v=\sum_{i=0}^\ell \phi^i(d_i)
  $$
  for certain $d_i\in \D$. In this case, we call $(V,\phi,\D)$ a \emph{number
  system}, and $\D$ is called a \emph{valid digit set} for $(V,\phi)$.
\end{definition}

Note that we can expect \emph{unique} expansions in a digit system only when
the digit set is irredundant.

The generality of this definition will be needed only occasionally in the
paper. Mostly, we restrict ourselves to polynomial digit systems, which are
defined as follows.

\begin{definition}
  A \emph{polynomial digit system} is a digit system of the form
  $(\E[x]/(f),X,\D)$, where $\E$ is a commutative ring, $f\in\E[x]$ a
  nonconstant polynomial such that neither the leading nor the constant
  coefficient is a zero divisor in $\E$, and $X$ is the residue class of $x$
  modulo $f$.
\end{definition}

Note that in this case, the digit set $\D$ consists of polynomials such that
their constant coefficients cover the cosets of $\E/(f(0))$. For examples, see
\cite{ScheicherSurerThuswaldnerVanDeWoestijne:11,VanDeWoestijne:09}.

The following definition is so natural as to appear just a tautology.

\begin{definition}
  The \emph{direct product} of digit systems $(V_1,\phi_1,\D_1)$ and
  $(V_2,\phi_2,\D_2)$ is
  $$
    (V_1 \times V_2, \phi_1\times\phi_2, \D_1\times\D_2),
  $$
  where the first $\times$ denotes the direct product of groups.
\end{definition}

The main question in this paper will be whether the direct product of two
\emph{polynomial digit systems} is again a polynomial digit system. That is,
given $(\E[x]/(f_1),X,\N_1)$ and $(\E[x]/(f_2),X,\N_2)$, we consider
the question whether an isomorphism
\begin{equation} \label{EqDirectProduct}
  (\; \E[x]/(f),\, X,\, \N \;)
  \cong 
  \left( \; \E[x]/(f_1) \times \E[x]/(f_2),\, X\times X,\, \N_1\times\N_2
  \;\right) 
\end{equation}
holds for some $f$ and $\N$. The goal is to reduce the study of more
complicated number systems to systems modulo polynomials of lower degree,
because an isomorphism preserves the Periodic Representation and Finite
Expansion properties, if present.

Using a suitable generalisation of the Chinese Remainder Theorem, we arrive at
the following conclusions.

\begin{theorem}
  Let $\E$ be a PID, and let $f_1$, $f_2\in\E[x]$ be coprime. The map
  \begin{equation} \label{EqPsi} 
    \E[x]/(f_1f_2) \stackrel{\psi}{\longrightarrow}\E[x]/(f_1)\times\E[x]/(f_2)
  \end{equation}
  sending $a$ to $(a\mod{f_1},a\mod{f_2})$ is injective. It is surjective if
  and only if the ideal $(f_1,f_2)$ is the unit ideal of $\E[x]$.
\end{theorem}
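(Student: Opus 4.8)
The plan is to treat the two assertions separately: injectivity will come from unique factorisation, and surjectivity from the usual Chinese Remainder argument. Since $\E$ is a PID, the polynomial ring $\E[x]$ is a UFD by Gauss's lemma, and throughout I read ``coprime'' in this ring, i.e.\ a greatest common divisor of $f_1$ and $f_2$ in $\E[x]$ is a unit (this is genuinely stronger than coprimality over the fraction field of $\E$, and it is the stronger notion that is needed). For injectivity I would compute $\ker\psi$. A class in $\E[x]/(f_1f_2)$ lies in $\ker\psi$ exactly when some representative $a\in\E[x]$ is divisible by both $f_1$ and $f_2$, so $\ker\psi=\bigl((f_1)\cap(f_2)\bigr)/(f_1f_2)$, and it suffices to show $(f_1)\cap(f_2)=(f_1f_2)$. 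One inclusion is immediate; for the other, write $g=f_1h$ with $f_2\mid g$, and use that $\E[x]$ is a UFD with $\gcd(f_1,f_2)$ a unit: Euclid's lemma then gives $f_2\mid h$, whence $g\in(f_1f_2)$. Thus $\ker\psi=0$.

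For surjectivity, observe first that $\psi$ is surjective if and only if the composite $\E[x]\to\E[x]/(f_1f_2)\stackrel{\psi}{\longrightarrow}\E[x]/(f_1)\times\E[x]/(f_2)$, namely $a\mapsto(a\bmod f_1,\,a\bmod f_2)$, is surjective, since the first map is onto. If $(f_1,f_2)=\E[x]$, pick $u,v\in\E[x]$ with $uf_1+vf_2=1$; then for a prescribed target $(a_1\bmod f_1,\,a_2\bmod f_2)$ the element $a=a_2uf_1+a_1vf_2$ reduces to $a_1$ modulo $f_1$ and to $a_2$ modulo $f_2$, so $\psi$ is onto. Conversely, if $\psi$ is onto, choose $a\in\E[x]$ with $f_2\mid a$ and $f_1\mid(a-1)$; writing $a=vf_2$ and $1-a=uf_1$ gives $1=uf_1+vf_2\in(f_1,f_2)$, so $(f_1,f_2)=\E[x]$.

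The two CRT computations are routine; the point I would want to highlight is precisely the gap between the two hypotheses. Coprimality of $f_1$ and $f_2$ in the UFD $\E[x]$ already forces $(f_1)\cap(f_2)=(f_1f_2)$, hence injectivity, \emph{without} any B\'ezout relation in $\E[x]$ (which is generally unavailable since $\E[x]$ need not be a PID); but it does not force $(f_1,f_2)$ to be the unit ideal: whenever $\E$ is not a field there are coprime pairs with $(f_1,f_2)$ proper, for instance $f_1$ a prime of $\E$ and $f_2=x$. I therefore expect no real obstacle beyond keeping this distinction in view and invoking unique factorisation, rather than comaximality, in the injectivity step.
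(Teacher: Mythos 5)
Your proof is correct. The injectivity half coincides in substance with the paper's: the kernel of $\psi$ is $\bigl((f_1)\cap(f_2)\bigr)/(f_1f_2)$, and the identity $(f_1)\cap(f_2)=(f_1f_2)$ is exactly what the paper extracts from unique factorisation in $\E[x]$ (it phrases this as $\lcm(f_1,f_2)=f_1f_2$ inside its Corollary on the CRT). You are also right, and right to emphasise, that coprimality in the UFD $\E[x]$ is strictly weaker than comaximality, which is the entire point of the theorem; the paper's own example $f_1=x+5$, $f_2=x+7$ over $\ZZ$ illustrates the same gap as your $(p,x)$ example.

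Where you genuinely diverge is the surjectivity criterion. The paper proves a stronger statement first: a general fibred-product form of the Chinese Remainder Theorem, $R/(I\cap J)\cong R/I\times_{R/(I+J)}R/J$, which identifies the \emph{image} of $\psi$ exactly as the set of pairs $(a_1,a_2)$ with $a_1\equiv a_2\pmod{(f_1,f_2)}$; surjectivity then falls out because the fibred product equals the full direct product precisely when the base ring $\E[x]/(f_1,f_2)$ is the zero ring. You instead test surjectivity directly: a B\'ezout relation gives the classical interpolant $a=a_2uf_1+a_1vf_2$ in one direction, and hitting the single target $(1,0)$ manufactures a B\'ezout relation in the other. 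Your route is more elementary and entirely self-contained, but it only answers the yes/no question of surjectivity; the paper's computation of the image is reused heavily later (integral interpolability, the congruence conditions on digit sets, the $k$-factor version), so the extra generality is not idle there. Both arguments are sound; yours would suffice for the theorem as stated.
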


\begin{corollary}
  The isomorphism \eqref{EqDirectProduct} holds with $f=f_1f_2$ if and only if
  $(f_1,f_2)$ is the unit ideal, and we have $\N=\psi^{-1}(\N_1\times \N_2)$.
\end{corollary}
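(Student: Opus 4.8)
The plan is to recognise that, once $f=f_1f_2$ has been fixed, the map $\psi$ of \eqref{EqPsi} is essentially the only possible isomorphism in \eqref{EqDirectProduct}, and then to invoke the theorem. First I would record the relevant structure of $\psi$: it is the $\E$-algebra homomorphism $\E[x]/(f_1f_2)\to\E[x]/(f_1)\times\E[x]/(f_2)$ induced by the two reductions, so it is $\E$-linear and sends the class of $x$ to $(x\bmod f_1,\,x\bmod f_2)$, which is precisely the element realising the endomorphism $X\times X$ on the right of \eqref{EqDirectProduct}; hence $\psi$ intertwines multiplication by $X$ with $X\times X$. By the theorem, $\psi$ is always injective, and it is bijective exactly when $(f_1,f_2)$ is the unit ideal. (If $f_1,f_2$ are not coprime, both conditions in the corollary fail---there is a non-unit common factor, and, by the uniqueness argument below, the only candidate isomorphism is $\psi$, which is then not even injective---so we may and do assume $f_1,f_2$ coprime and apply the theorem.)

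For the forward implication, assume $(f_1,f_2)=\E[x]$; the theorem then makes $\psi$ a ring isomorphism. I would put $f:=f_1f_2$ and $\N:=\psi^{-1}(\N_1\times\N_2)$ and check that $(\E[x]/(f),X,\N)$ is genuinely a polynomial digit system: $f$ is non-constant, and since $\E$ is a domain the leading and constant coefficients of $f=f_1f_2$, being products of the corresponding non-zero coefficients of $f_1$ and $f_2$, are non-zero; moreover $\N$ is finite and, because $\psi$ identifies the cokernel of $X$ on $\E[x]/(f)$ with the product of the cokernels of $X$ on $\E[x]/(f_1)$ and on $\E[x]/(f_2)$, the set $\N$ covers $\E[x]/(f)$ modulo $X$ (each $\N_i$ covering $\E[x]/(f_i)$ modulo $X$ because $(\E[x]/(f_i),X,\N_i)$ is a digit system). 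Being a group isomorphism that intertwines the two endomorphisms and satisfies $\psi(\N)=\N_1\times\N_2$, $\psi$ is then an isomorphism of digit systems, i.e.\ \eqref{EqDirectProduct} holds with the stated $f$ and $\N$.

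For the converse, suppose \eqref{EqDirectProduct} holds with $f=f_1f_2$ through some isomorphism $\Phi$. An isomorphism of polynomial digit systems must respect the ambient $\E$-algebra structure and must carry the base $X$ to the base $X\times X$, hence send the class of $x$ to $(x\bmod f_1,\,x\bmod f_2)$; but an $\E$-algebra homomorphism out of $\E[x]/(f_1f_2)$ is determined by the image of the class of $x$, so $\Phi=\psi$. Consequently $\psi$ is bijective, so $(f_1,f_2)$ is the unit ideal by the theorem, and $\N=\Phi^{-1}(\N_1\times\N_2)=\psi^{-1}(\N_1\times\N_2)$. I expect this converse to be the main obstacle: everything hinges on the uniqueness of $\Phi$, i.e.\ on reading ``isomorphism'' in the $\E$-algebra (ring) sense rather than the purely group-theoretic one. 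If one insists on the latter, the same conclusion still holds but needs a little module-theoretic care---one argues that $\E[x]/(f_1f_2)$ is a cyclic module over $\E[x]$ (acting through $X$), so the right-hand side must be cyclic as well, which for coprime $f_1,f_2$ forces $(f_1,f_2)=\E[x]$; the admissibility and validity checks then go through exactly as in the forward implication.
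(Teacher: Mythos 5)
Your proof is correct and follows essentially the route the paper intends: the paper states this corollary without proof as an immediate consequence of the theorem, the isomorphism in \eqref{EqDirectProduct} being understood to be the one realised by $\psi$ (as the conclusion $\N=\psi^{-1}(\N_1\times\N_2)$ already presupposes), so that both directions reduce to ``$\psi$ bijective iff $(f_1,f_2)=(1)$'' plus the observation that $\psi$ intertwines $X$ with $X\times X$ and matches the digit sets. Your additional discussion of the uniqueness of the isomorphism $\Phi$ goes beyond what the paper does and is welcome; the only caveat is that in the purely group-theoretic reading your fallback argument transports cyclicity \emph{over $\E[x]$} through $\Phi$, which requires $\Phi$ to be $\E$-linear and not merely additive --- harmless for $\E=\ZZ$, but not automatic for a general PID $\E$.
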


When the leading coefficients of $f_1$ and $f_2$ are coprime in $\E$, then the
condition $(f_1,f_2)=(1)$ may be checked by checking that $\Res(f_1,f_2)$ is a
unit in $\E$ (see Lemma~\ref{LemRes} below).

Even if the isomorphism of the underlying groups in \eqref{EqDirectProduct}
does not hold, it is still possible in certain cases to embed the left side of
\eqref{EqDirectProduct} into the right hand side as a \emph{sub-number system}.
For this to hold, however, there are rather heavy restrictions on both $f_1$
and $f_2$ and the digits $\N_1$ and $\N_2$; in particular, in most cases $\N_1$
and $\N_2$ cannot contain $0$. The exact details will be given in Theorems
\ref{ThmCRTNumberSystem} and \ref{ThmMaximalPeriod}.

Our construction will also give a clear characterisation of \emph{simultaneous
number systems}, as defined in \cite{IndlekoferKataiRacsko:1992}. Among others,
we obtain an easy proof of the following, where we use a recent theorem on
products of linear polynomials independently due to Kane \cite{Kane:2006} and
Peth\H o \cite{Petho:2006}. Recall that a polynomial $f\in\ZZ[x]$ is called a
\emph{CNS polynomial} if the polynomial digit system $(\ZZ[x]/(f),\,X,\,
\{0,1,\ldots,|f(0)|-1\})$ has the Finite Expansion Property (for more on this
concept, see \cite[Section 3.1]{BaratBertheLiardetThuswaldner:2006}). 

\begin{theorem}
  Let $N_1,\ldots,N_k$ be distinct integers with $N_j\le -2$ for all $j$. If
  $\prod_{j=1}^k (x-N_j)$ is a CNS polynomial, then every integer $a$ has
  a unique simultaneous expansion of the form
  $$
    a = \sum_{i=0}^\ell d_i N_j^i \qquad (j=1,\ldots,k),
  $$
  where the $d_i$ are in $\{0,1,\ldots,|N_1\cdots N_k|-1\}$ and are the same
  for all $k$ bases $N_j$. In particular, the conclusion holds whenever $k\le
  4$.
\end{theorem}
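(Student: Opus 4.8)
Write $f(x)=\prod_{j=1}^{k}(x-N_{j})\in\ZZ[x]$: a monic polynomial of degree $k$ with $f(0)=\prod_{j=1}^{k}(-N_{j})$, so that $|f(0)|=|N_{1}\cdots N_{k}|$ and, since every $N_{j}\le-2$, its constant coefficient is nonzero. Put $\D=\{0,1,\ldots,|f(0)|-1\}$; by hypothesis $(\ZZ[x]/(f),X,\D)$ then has the Finite Expansion Property. The plan is to identify a simultaneous expansion of an integer $a$ with an expansion of $a$, regarded as a constant of $\ZZ[x]/(f)$, in this CNS system: existence will come from the Finite Expansion Property and uniqueness from irredundancy. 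To prepare the ground I would first observe that the cokernel of multiplication by $X$ on $\ZZ[x]/(f)$ is $\ZZ[x]/(x,f)\cong\ZZ/(f(0))$, of which $\D$ is an exact transversal; hence the digit system is irredundant. Since $f(0)\ne0$, multiplication by $X$ is injective on $\ZZ[x]/(f)$, and a short induction on expansion length then shows that every element of $\ZZ[x]/(f)$ has a \emph{unique} expansion with digits in $\D$.

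Next, for each $j$ the substitution $x\mapsto N_{j}$ descends to a ring homomorphism $\varepsilon_{j}\colon\ZZ[x]/(f)\to\ZZ$, well defined because $f(N_{j})=0$, which fixes $\ZZ$ pointwise. The linear polynomials $x-N_{j}$ are pairwise coprime in $\ZZ[x]$, so, applying the Chinese Remainder Theorem stated above $k-1$ times (the $k$-fold analogue of the injective map~\eqref{EqPsi}), the combined map $(\varepsilon_{1},\ldots,\varepsilon_{k})\colon\ZZ[x]/(f)\to\ZZ^{k}$ is injective; it sends a constant $a$ to $(a,\ldots,a)$. Now fix $a\in\ZZ$. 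By the Finite Expansion Property there are $d_{0},\ldots,d_{\ell}\in\D$ with $a=\sum_{i=0}^{\ell}X^{i}d_{i}$ in $\ZZ[x]/(f)$; applying $\varepsilon_{j}$ yields $a=\sum_{i=0}^{\ell}d_{i}N_{j}^{i}$ for every $j$ --- a simultaneous expansion with common digits in $\{0,\ldots,|N_{1}\cdots N_{k}|-1\}$ and a single length $\ell$.

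For uniqueness, suppose $a=\sum_{i=0}^{\ell}d_{i}N_{j}^{i}=\sum_{i=0}^{\ell}d_{i}'N_{j}^{i}$ holds for all $j$ (after padding with zero digits to a common length). This says exactly that $(\varepsilon_{1},\ldots,\varepsilon_{k})$ maps both $\sum_{i}X^{i}d_{i}$ and $\sum_{i}X^{i}d_{i}'$ to $(a,\ldots,a)$; by injectivity these two elements of $\ZZ[x]/(f)$ coincide, so $(d_{i})$ and $(d_{i}')$ are both expansions, with digits in $\D$, of the same element, and the uniqueness established above gives $d_{i}=d_{i}'$ for all $i$. I would stress that this is \emph{not} merely the remark that two digit polynomials agreeing at the $k$ points $N_{1},\ldots,N_{k}$ must be equal --- that is false once $\ell\ge k$, since they need only be congruent modulo $f$; the real content here is the irredundancy of the CNS digit set.

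Finally, for the last assertion I would invoke the theorem of Kane~\cite{Kane:2006} and Peth\H{o}~\cite{Petho:2006}, by which $\prod_{j=1}^{k}(x-N_{j})$ with distinct $N_{j}\le-2$ is a CNS polynomial whenever $k\le4$ (the cases $k=1$, expansion in a negative base, and $k=2$ being elementary), so that the first part applies. I expect the one genuinely delicate point to be the uniqueness step --- routing it through irredundancy rather than through a false polynomial identity --- while the remaining work, namely setting up the evaluation homomorphisms and iterating the two-factor Chinese Remainder Theorem over the factors $x-N_{1},\ldots,x-N_{k}$, is routine.
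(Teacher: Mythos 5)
Your proof is correct and follows essentially the same route as the paper: it identifies simultaneous expansions of an integer $a$ with expansions of the constant $a$ in the CNS system $(\ZZ[x]/(f),X,\D)$ via the injective Chinese-Remainder map into $\prod_j\ZZ[x]/(x-N_j)$ (the paper's submodule $W$ of Corollary \ref{Cor111}), gets existence from the Finite Expansion Property and the case $k\le 4$ from Kane--Peth\H{o}. The only difference is that you spell out the uniqueness step (injectivity of $\psi$ combined with irredundancy of the digit set and injectivity of multiplication by $X$), which the paper leaves implicit; your argument there is sound.
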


In a more general context, we reduce the existence of such simultaneous number
systems to the algebraic-geometric problem of finding sets of polynomials with
coefficients in a given ring that pairwise have unit resultant. For example, it
seems to be unclear if there exists such a set of infinite cardinality.

All results in this paper are proved under the assumption that the ground ring
$\E$ is a principal ideal domain, unless stated otherwise.

\end{section}

\begin{section}{Algebraic background}

\begin{subsection}{The Chinese Remainder Theorem}

We will need a rather more general form of the CRT than usual. Recall that if
rings $A_1$ and $A_2$ map via homomorphisms $\pi_1$ and $\pi_2$ to a third ring
$B$, then the \emph{fibred product} of $A_1$ and $A_2$ over $B$ is defined as
$$
  A_1 \times_B A_2 = \{ (a_1,a_2) \in A_1\times A_2 :
                        \pi_1(a_1) = \pi_2(a_2) \}.
$$
It is a subring of the direct product $A_1 \times A_2$.

\begin{theorem}[\bf Chinese Remainder Theorem]\label{ThmCRT}
  Let $R$ be a commutative ring, with ideals $I$ and $J$. Then the map
  $\psi:R\rightarrow R/I\times R/J$, defined as
  $\psi(a)=(a\bmod{I},a\bmod{J})$, induces an isomorphism
  $$
    R/(I\cap J) \cong R/I \times_{R/(I+J)} R/J.
  $$
\end{theorem}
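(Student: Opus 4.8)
The plan is to verify directly that $\psi$ lands in the fibred product, compute its kernel, and prove surjectivity onto the fibred product by an explicit lift; the induced map on $R/(I\cap J)$ is then automatically an isomorphism.

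First I would check that the target makes sense. Writing $\pi_1:R/I\to R/(I+J)$ and $\pi_2:R/J\to R/(I+J)$ for the canonical surjections, for any $a\in R$ we have $\pi_1(a+I)=a+(I+J)=\pi_2(a+J)$, so $\psi(a)$ satisfies the fibre condition and $\psi$ may be regarded as a ring homomorphism $R\to R/I\times_{R/(I+J)}R/J$. Next, the kernel: $\psi(a)=0$ precisely when $a\in I$ and $a\in J$, i.e.\ when $a\in I\cap J$. Hence $\psi$ factors through an injective homomorphism $\bar\psi:R/(I\cap J)\hookrightarrow R/I\times_{R/(I+J)}R/J$.

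The one step requiring an idea, albeit a small one, is surjectivity. Given a pair $(a_1+I,\,a_2+J)$ in the fibred product, the fibre condition says precisely that $a_1-a_2\in I+J$; write $a_1-a_2=i+j$ with $i\in I$ and $j\in J$, and set $b=a_1-i$, which also equals $a_2+j$. Then $b\equiv a_1\pmod I$ and $b\equiv a_2\pmod J$, so $\psi(b)=(a_1+I,\,a_2+J)$. Thus $\bar\psi$ is onto as well, and therefore an isomorphism.

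I do not expect a genuine obstacle here: the content is simply the observation that the classical obstruction to surjectivity of the CRT map, namely the failure of $I+J=R$, is exactly what is recorded by the fibre condition over $R/(I+J)$, so imposing that condition restores surjectivity. The only point to state carefully is that $\bar\psi$ is a ring isomorphism and not merely an additive bijection, but this is immediate since $\psi$ is a ring homomorphism and the fibred product carries the subring structure induced from $R/I\times R/J$.
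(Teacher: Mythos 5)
Your argument is correct and follows the same route as the paper: identify the kernel as $I\cap J$, observe that the fibre condition is exactly $a_1-a_2\in I+J$, and lift by splitting $a_1-a_2=i+j$ to get a common preimage. The paper's lift $a+u=a'+v$ is your $b=a_1-i=a_2+j$ in slightly different notation, so there is nothing to add.
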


\begin{proof}
  Clearly, the kernel of $\psi$ is $I\cap J$. Thus, it remains to prove that
  $(a,a')$ is in the image of $\psi$ if and only if
  \begin{equation} \label{EqMod}
    a\bmod{I+J}= a'\bmod{I+J}.
  \end{equation}
  One inclusion is clear: given $a\in R$, clearly $(a\bmod{I})\bmod{I+J} =
  (a\bmod{J})\bmod{I+J}$. Now let $a\in R/I$ and $a'\in R/J$ satisfy
  \eqref{EqMod}. This means that $a+I+J=a'+I+J$, so there exists $u\in I$ and
  $v\in J$ with $a+u=a'+v$. But then $\psi(a+u)=(a,a')$, as desired, and the
  proof is done.
\end{proof}

In the following, we will follow established usage in calling elements of a
factorial ring that have trivial greatest common divisor \emph{coprime},
although this should actually mean that these elements together generate the
unit ideal. The notation $(a,b)$ denotes the \emph{ideal} generated by elements
$a$ and $b$, whereas $\gcd(a,b)$ denotes their gcd.

\begin{corollary} \label{CorCRT}
  Let $\E$ be a factorial ring, and let $f_1,f_2$ be in $\E[x]$ coprime. Then
  we have an isomorphism
  $$
    \E[x]/(f_1f_2) \cong \E[x]/(f_1) \times_{\E[x]/(f_1,f_2)} \E[x]/(f_2).
  $$
  In particular, given $a_1\in\E[x]/(f_1)$ and $a_2\in\E[x]/(f_2)$, there
  exists $a\in\E[x]/(f_1f_2)$ with $a\equiv a_i\bmod{f_i}$ ($i=1,2$) if and
  only if
  $$
    a_1\equiv a_2\bmod{(f_1,f_2)}.
  $$
\end{corollary}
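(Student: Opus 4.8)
The plan is to derive Corollary~\ref{CorCRT} as a direct specialisation of Theorem~\ref{ThmCRT}, working in the ring $R=\E[x]$ with ideals $I=(f_1)$ and $J=(f_2)$. Since $\E$ is factorial, $\E[x]$ is also factorial (by Gauss's lemma), so the notion of coprimality via gcd makes sense; I would begin by observing that $f_1,f_2$ coprime means $\gcd(f_1,f_2)=1$ in $\E[x]$, and hence the principal ideals $(f_1)$ and $(f_2)$ satisfy $(f_1)\cap(f_2)=(f_1f_2)$. This is the one genuinely ring-theoretic point: in a UFD, $\gcd(f_1,f_2)=1$ forces $\lcm(f_1,f_2)=f_1f_2$ up to units, and $(f_1)\cap(f_2)=(\lcm(f_1,f_2))$, so $I\cap J=(f_1f_2)$.

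With that identification in hand, the rest is pure substitution. The sum $I+J$ is the ideal $(f_1,f_2)$ generated by $f_1$ and $f_2$; note this need \emph{not} be the unit ideal even though $f_1,f_2$ are coprime in the gcd sense — this is precisely the subtlety flagged in the paragraph preceding the corollary, and it is why the fibred product over $\E[x]/(f_1,f_2)$ cannot in general be replaced by a full direct product. Theorem~\ref{ThmCRT} then reads
$$
  \E[x]/(f_1f_2) = R/(I\cap J) \cong R/I \times_{R/(I+J)} R/J
  = \E[x]/(f_1) \times_{\E[x]/(f_1,f_2)} \E[x]/(f_2),
$$
which is the claimed isomorphism.

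For the ``in particular'' clause, I would simply unwind the definition of the fibred product together with the description of the image of $\psi$ obtained in the proof of Theorem~\ref{ThmCRT}. An element $a\in\E[x]/(f_1f_2)$ restricting to $a_i$ modulo $f_i$ exists precisely when $(a_1,a_2)$ lies in the image of $\psi$, which by that theorem happens if and only if $a_1$ and $a_2$ agree modulo $I+J=(f_1,f_2)$; that is the congruence $a_1\equiv a_2\bmod{(f_1,f_2)}$.

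I do not expect any real obstacle here: the only step requiring a word of justification is the equality $(f_1)\cap(f_2)=(f_1f_2)$ for coprime $f_1,f_2$ in the factorial ring $\E[x]$, and that is a standard fact about UFDs. Everything else is a transcription of Theorem~\ref{ThmCRT} and its proof into the present notation.
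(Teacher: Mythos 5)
Your proposal is correct and follows exactly the paper's own route: apply Theorem~\ref{ThmCRT} to the principal ideals $(f_1)$ and $(f_2)$, use that $\E[x]$ is factorial to get $(f_1)\cap(f_2)=(\lcm(f_1,f_2))=(f_1f_2)$ for coprime $f_1,f_2$, and read off the ``in particular'' clause from the description of the image of $\psi$. No gaps; the one point you single out for justification is the same one the paper highlights.
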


\begin{proof}
  We apply the Theorem to the principal ideals $(f_1)$ and $(f_2)$. Then
  because $\E$ is factorial, also $\E[x]$ is factorial, and we have $(f_1)\cap
  (f_2)=(\lcm(f_1,f_2))$; and because $f_1$ and $f_2$ are coprime, we have
  $\lcm(f_1,f_2)=f_1f_2$.
\end{proof}

\begin{corollary} \label{CorCRTField}
  Let $K$ be a field, and let $f_1$ and $f_2$ in $K[x]$ be coprime. Then we
  have
  $$
    K[x]/(f_1f_2) \cong K[x]/(f_1) \times K[x]/(f_2).
  $$
\end{corollary}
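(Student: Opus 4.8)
The plan is to deduce this directly from Corollary~\ref{CorCRT}, the only extra ingredient being that over a field the coprimality hypothesis is strong enough to collapse the fibred product into a genuine direct product. Concretely, I would apply Corollary~\ref{CorCRT} with $\E=K$, which is legitimate since any field is factorial, obtaining the isomorphism
$$
  K[x]/(f_1f_2) \cong K[x]/(f_1) \times_{K[x]/(f_1,f_2)} K[x]/(f_2).
$$

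Next I would observe that $K[x]$ is a principal ideal domain, so that $(f_1,f_2)=(\gcd(f_1,f_2))=(1)$ whenever $f_1$ and $f_2$ are coprime; in other words the base ring $K[x]/(f_1,f_2)$ of the fibred product is the zero ring. The final step is the elementary remark that a fibred product $A_1\times_B A_2$ taken over the zero ring $B=0$ coincides with the ordinary direct product $A_1\times A_2$, because the defining compatibility condition $\pi_1(a_1)=\pi_2(a_2)$ holds vacuously when $B$ has only one element. Substituting $A_i=K[x]/(f_i)$ then yields the stated isomorphism.

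There is essentially no obstacle here: the content has already been done in Theorem~\ref{ThmCRT} and Corollary~\ref{CorCRT}, and the only point requiring a word of justification is the passage from ``$f_1,f_2$ coprime'' to ``$(f_1,f_2)$ is the unit ideal'', which is exactly the special feature of the polynomial ring over a \emph{field} (compare the discussion after the Introduction's Theorem, where for a general PID base $\E$ this implication fails and $\Res(f_1,f_2)$ must additionally be a unit). Thus the proof will be two or three lines long.
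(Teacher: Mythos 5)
Your proposal is correct and follows exactly the paper's own route: invoke Corollary~\ref{CorCRT}, note that over the field $K$ coprimality of $f_1,f_2$ forces $(f_1,f_2)=(1)$ so the base of the fibred product is the zero ring $K[x]/(1)$, and conclude that the fibred product collapses to the direct product. Your extra remarks (fields are factorial, $K[x]$ is a PID, the compatibility condition is vacuous over the zero ring) merely spell out what the paper leaves implicit.
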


\begin{proof}
  Over $K$, we have $(f_1,f_2)=(1)$ whenever $f_1$ and $f_2$ are coprime.
  Thus, the fibred product is over the \emph{zero ring} $K[x]/(1)$, and
  therefore equal to the direct (Cartesian) product.
\end{proof}

The conclusion of Corollary \ref{CorCRTField}, and hence the conventional
Chinese Remainder Theorem, is also true over a factorial ring $\E$ if, and only
if, the ideal $(f_1,f_2)$ is the unit ideal of $\E[x]$, so that
$\E[x]/(f_1,f_2)$ is the zero ring. The next result describes this situation
and extends it to products of more than $2$ factors.

\begin{corollary} \label{Cor111}
  Let $\E$ be a factorial ring, let $f_1,\ldots,f_k\in\E[x]$ be pairwise
  coprime, and let $R_i=\E[x]/(f_i)$ for $i=1,\ldots,k$. Define
  $$
    \psi:\E[x]/(f_1\cdots f_k) \rightarrow R_1 \times \cdots \times R_k
  $$
  by $a\mapsto (a\bmod f_1, \ldots, a \bmod f_k)$, and let $W$ be the image of
  $\psi$. Then
  $$
    W = \{ (a_1,\ldots,a_k) \mid a_i\equiv a_j \!\!\pmod{(f_i,f_j)} \text{ for }
      1\le i < j \le k \}.
  $$
  In particular, $\psi$ is surjective if and only if $(f_i,f_j)=(1)$ whenever
  $i\ne j$.
\end{corollary}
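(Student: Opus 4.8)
The plan is to reduce to the two-factor case (Corollary \ref{CorCRT}) by induction on $k$, exploiting the fact that $f_1\cdots f_{k-1}$ and $f_k$ are themselves coprime in $\E[x]$, which holds precisely because $\E[x]$ is factorial and each $f_i$ ($i<k$) is coprime to $f_k$. First I would dispose of the base case $k=2$, where the claim is just the second assertion of Corollary \ref{CorCRT}, rewritten in terms of the image $W$. Then, for the inductive step, I would factor $\psi$ as the composite of $\E[x]/(f_1\cdots f_k)\to \E[x]/(f_1\cdots f_{k-1})\times \E[x]/(f_k)$ followed by the map on the first coordinate given by the inductive hypothesis for $\psi':\E[x]/(f_1\cdots f_{k-1})\to R_1\times\cdots\times R_{k-1}$.

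The key steps, in order, are: (1) observe that coprimality of the $f_i$ implies $\gcd(f_1\cdots f_{k-1},f_k)=1$, so Corollary \ref{CorCRT} applies to the pair $(f_1\cdots f_{k-1}, f_k)$ and identifies the image of the first, "outer" map with $\{(b,a_k): b\equiv a_k\pmod{(f_1\cdots f_{k-1},f_k)}\}$; (2) by the inductive hypothesis, the image of $\psi'$ consists of those $(a_1,\ldots,a_{k-1})$ with $a_i\equiv a_j\pmod{(f_i,f_j)}$ for $i<j\le k-1$; (3) combine these two descriptions, so that $W$ consists of all tuples $(a_1,\ldots,a_k)$ satisfying the "lower" congruences among $a_1,\ldots,a_{k-1}$ together with the single condition $b\equiv a_k\pmod{(f_1\cdots f_{k-1},f_k)}$, where $b\in\E[x]/(f_1\cdots f_{k-1})$ is any element mapping to $(a_1,\ldots,a_{k-1})$ under $\psi'$; (4) show this last condition is equivalent to the conjunction of $a_i\equiv a_k\pmod{(f_i,f_k)}$ for $i=1,\ldots,k-1$. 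Granting (4), $W$ has exactly the stated form, and surjectivity of $\psi$ is then equivalent to all the ideals $(f_i,f_j)$ being the unit ideal, since a product of copies of the zero ring imposes no constraint while any proper $(f_i,f_j)$ excludes, e.g., tuples differing only in coordinates $i$ and $j$.

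The main obstacle is step (4): translating the single congruence modulo the ideal $(f_1\cdots f_{k-1},f_k)$ into the system of pairwise congruences modulo $(f_i,f_k)$. For this I would use that $b\equiv a_i\pmod{f_i}$ for each $i<k$ (since $b$ lifts $(a_1,\ldots,a_{k-1})$), so $b\equiv a_k\pmod{(f_i,f_k)}$ forces $a_i\equiv a_k\pmod{(f_i,f_k)}$, giving one direction directly. The converse — that the pairwise congruences $a_i\equiv a_k\pmod{(f_i,f_k)}$ together allow a lift $b$ with $b\equiv a_k\pmod{(f_1\cdots f_{k-1},f_k)}$ — is the delicate point; here I expect to invoke the ideal identity $(f_1\cdots f_{k-1}, f_k) = \bigcap_{i=1}^{k-1}(f_i,f_k)$, valid in the factorial ring $\E[x]$ because the $f_i$ are pairwise coprime (so the prime factors contributing to $(f_i,f_k)$ for distinct $i$ are disjoint, and $f_1\cdots f_{k-1}$ and $f_k$ share a prime $p$ iff some $f_i$ and $f_k$ do). With that identity, $b\equiv a_k$ modulo the intersection is equivalent to $b\equiv a_k$ modulo each $(f_i,f_k)$, and since $b\equiv a_i\pmod{f_i}$ this is exactly $a_i\equiv a_k\pmod{(f_i,f_k)}$. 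A small amount of care is needed to check that such a $b$ indeed exists in $\E[x]/(f_1\cdots f_{k-1})$, which again follows from the inductive description of $W$ once the lower congruences hold; combining these completes the argument.
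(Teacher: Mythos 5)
Your overall strategy --- induction on $k$, applying Corollary \ref{CorCRT} to the coprime pair $(f_1\cdots f_{k-1},\,f_k)$ and then unwinding the resulting fibre condition --- is exactly the route the paper takes, and steps (1)--(3) together with the easy half of step (4) are fine: since $f_1\cdots f_{k-1}\in(f_i)$, one has $(f_1\cdots f_{k-1},f_k)\subseteq(f_i,f_k)$, so $b\equiv a_k$ modulo the big ideal forces $a_i\equiv a_k\pmod{(f_i,f_k)}$ for each $i$. The gap is the converse in step (4). The ideal identity $(f_1\cdots f_{k-1},f_k)=\bigcap_{i<k}(f_i,f_k)$ that you invoke is false, and your justification conflates ideals of $\E[x]$ with greatest common divisors: the ideal $(f,g)$ is not determined by the prime factorisations of $f$ and $g$ when $\E[x]$ is not a PID. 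Concretely, take $\E=\ZZ$, $f_1=x-1$, $f_2=x+1$, $f_3=x-3$. Then $(f_1f_2,f_3)=(x^2-1,\,x-3)=(8,\,x-3)$, while $(f_1,f_3)=(2,\,x-3)$ and $(f_2,f_3)=(4,\,x-3)$; the element $4$ lies in $(2,x-3)\cap(4,x-3)$ but not in $(8,x-3)$. Only the inclusion $(f_1\cdots f_{k-1},f_k)\subseteq\bigcap_{i<k}(f_i,f_k)$ holds in general.

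This is not a repairable slip in the write-up: the equality asserted by the Corollary itself fails for $k\ge3$. In the example above, the triple $(a_1,a_2,a_3)=(0,0,4)$ satisfies all three pairwise congruences ($-4\in(2,x-3)$ and $-4\in(4,x-3)$), yet any $p\in\ZZ[x]$ with $p(1)=p(-1)=0$ is divisible by $x^2-1$ in $\ZZ[x]$ (Gauss's lemma), whence $p(3)\in 8\ZZ$ and $p(3)=4$ is impossible; so $(0,0,4)\notin W$. Thus your argument --- like the paper's one-line proof, which cites only the inclusions $(f_1\cdots f_{k-1},f_k)\subseteq(f_i,f_k)$ --- establishes the containment $W\subseteq\{(a_1,\ldots,a_k)\mid a_i\equiv a_j\pmod{(f_i,f_j)}\}$ but not the reverse one. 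What does survive is the ``in particular'' clause: if $(f_i,f_k)=(1)$ for all $i<k$, then each $f_i$ is a unit modulo $(f_k)$, hence so is the product $f_1\cdots f_{k-1}$, giving $(f_1\cdots f_{k-1},f_k)=(1)$ and, by induction, surjectivity of $\psi$; conversely, surjectivity combined with the containment just proved forces each $\E[x]/(f_i,f_j)$ to be the zero ring. If you only need that surjectivity criterion, your induction can be salvaged; the full description of $W$ for $k\ge3$ requires additional compatibility conditions beyond the pairwise ones (in the interpolation language of Corollary \ref{CorInterp}, integrality of the higher divided differences).
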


\begin{proof}
  By induction, where we use Corollary \ref{CorCRT} and the inclusions
  $(f_1\cdots f_{k-1},f_k)\subseteq (f_i,f_k)$ for $1\le i\le k-1$.
\end{proof}

\begin{corollary} \label{CorInterp}
  Assume the notations of Corollary \ref{Cor111}. Then an ordered tuple
  $(a_1,\ldots,a_k)\in R_1\times \cdots \times R_k$ is integrally interpolable
  by a polynomial in $\E[x]$ if and only if it is in $W$.
\end{corollary}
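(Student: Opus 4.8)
The plan is to observe that the phrase ``integrally interpolable by a polynomial in $\E[x]$'' is merely a reformulation of ``lies in the image $W$ of $\psi$'', after which the statement follows at once from Corollary~\ref{Cor111}.

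First I would unwind the definition. To say that $(a_1,\ldots,a_k)\in R_1\times\cdots\times R_k$ is integrally interpolable means that there is a polynomial $a\in\E[x]$ with $a\bmod f_i=a_i$ for every $i$. Given such an $a$, its residue class modulo $f_1\cdots f_k$ is an element of the domain of $\psi$ mapping to $(a_1,\ldots,a_k)$, so the tuple is in $W$. Conversely, any element of $W$ equals $\psi(b)$ for some residue class $b$ modulo $f_1\cdots f_k$; picking a representative of $b$ in $\E[x]$ gives an interpolating polynomial. The only point requiring a moment's care is that $\psi$ is defined on $\E[x]/(f_1\cdots f_k)$ rather than on $\E[x]$ itself, but since every residue class modulo $f_1\cdots f_k$ has a representative in $\E[x]$, the two descriptions of ``interpolable'' coincide. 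Note also that no hypothesis beyond those already imposed in Corollary~\ref{Cor111} (namely, $\E$ factorial and the $f_i$ pairwise coprime) is needed.

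With this identification in hand there is essentially nothing left to prove: Corollary~\ref{Cor111} already exhibits $W$ as exactly the set of tuples with $a_i\equiv a_j\pmod{(f_i,f_j)}$ for all $1\le i<j\le k$. So I do not expect a genuine obstacle; the ``work'' is purely one of translation between the language of ring surjectivity and that of interpolation. The part most worth spelling out for the reader is the concrete meaning of the criterion in the motivating case $\E=\ZZ$, $f_i=x-N_i$, where $R_i\cong\ZZ$ via $a\mapsto a(N_i)$, the ideal $(f_i,f_j)$ becomes $(N_i-N_j)$, and the condition reduces to the classical requirement $a(N_i)\equiv a(N_j)\pmod{N_i-N_j}$ for integer interpolation.
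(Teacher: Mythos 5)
Your argument is correct and coincides with the paper's own proof: both simply observe that ``integrally interpolable'' means being in the image of $\psi$, which is $W$ by definition. The extra remarks (lifting representatives modulo $f_1\cdots f_k$ and the classical case $\E=\ZZ$, $f_i=x-N_i$) are fine but not needed.
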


Note that any tuple $(a_1,\ldots,a_k)$ is interpolable by a polynomial over
the quotient field of $\E$; the question is whether this polynomial has
integral coefficients.

\begin{proof}
  An element $a\in\E[x]$ interpolates $(a_1,\ldots,a_k)$ whenever 
  $a\equiv a_i\pmod{f_i}$ for $i=1,\ldots,k$, that is, whenever
  $\psi(a)=(a_1,\ldots,a_k)$.
\end{proof}

\end{subsection}

\begin{subsection}{Strong Gr\"obner bases}

In order for Corollary \ref{CorCRT} to be useful, we will need a description in
some detail of the rings $\E[x]/(f_1,f_2)$ for polynomials $f_1,f_2\in\E[x]$.
Now the ring $\E[x]$ need not be a PID, even if $\E$ is, and in fact the
structure of generating sets of ideals in $\E[x]$ can be rather complicated.
For the case where $\E$ is a PID, a normal form for ideals in $\E[x]$ (nowadays
called \emph{strong Gr\"obner basis}) which at least permits to describe the
additive structure of the quotient ring is given by the Szekeres-Lazard
theorem \cite[Theorems~4.5.9 and 4.5.13]{AdamsLoustaunau:1994}.

\begin{theorem} \label{ThmSzekeresLazard} (Szekeres-Lazard)
  Let $\E$ be a PID and let $I$ be a nonzero ideal of $\E[x]$. Then $I$ has a
  set of generators $g_0,\ldots,g_m$ of the form
  \begin{align*}
    g_0 &= a_1 a_2 \ldots a_m g, \\
    a_k g_k &= x g_{k-1} + \sum_{i=0}^{k-1} b_{ki} g_i \qquad (1\le k\le m),
  \end{align*}
  for certain $a_k$ and $b_{ki}$ in $\E$, with all $a_k\ne 0$, and with $g$
  equal to $\gcd(I)$.
\end{theorem}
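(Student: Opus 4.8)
The plan is a staircase construction on leading coefficients. For $d\ge 0$ put $L_d=\{\operatorname{lc}(h):h\in I,\ \deg h=d\}\cup\{0\}$; each $L_d$ is an ideal of $\E$, and multiplication by $x$ gives $L_d\subseteq L_{d+1}$. Since $\E$ is a PID, each $L_d$ is principal, say $L_d=(c_d)$, and the chain stabilises. Let $\delta_0$ be the least index with $L_{\delta_0}\ne 0$ (it exists because $I\ne 0$), let $D$ be the least index $\ge\delta_0$ with $L_d=L_D$ for all $d\ge D$, and set $m=D-\delta_0$. For $0\le k\le m$ choose $g_k\in I$ with $\deg g_k=\delta_0+k$ and $\operatorname{lc}(g_k)=c_{\delta_0+k}$; and for $1\le k\le m$, since $c_{\delta_0+k}\mid c_{\delta_0+k-1}$, write $c_{\delta_0+k-1}=a_kc_{\delta_0+k}$ with $a_k\in\E\setminus\{0\}$.

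First I would prove, by induction on $j$ for $\delta_0\le j\le D$, that the $\E$-module $\E g_0+\dots+\E g_{j-\delta_0}$ equals $\{h\in I:\deg h\le j\}$; the inductive step amounts to subtracting a suitable $\E$-multiple of $g_{j-\delta_0}$ from an element of degree exactly $j$ to lower its degree, using that $\operatorname{lc}(g_{j-\delta_0})=c_j$ generates $L_j$. The recursion then falls out: for $1\le k\le m$ the leading terms cancel in $xg_{k-1}-a_kg_k$ by the choice of $a_k$, so this element lies in $I$ and has degree $\le\delta_0+k-1$, hence by the claim it is an $\E$-linear combination of $g_0,\dots,g_{k-1}$; rearranging gives $a_kg_k=xg_{k-1}+\sum_{i<k}b_{ki}g_i$ with $b_{ki}\in\E$. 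Moreover $g_0,\dots,g_m$ generate $I$ as an ideal: any $h\in I$ of degree $>D$ can be reduced below degree $D$ by subtracting $\E$-multiples of the polynomials $x^ig_m$ (possible since $L_d=(c_D)=(\operatorname{lc}(g_m))$ for $d\ge D$), after which the claim applies.

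It remains to identify $g_0$ with $(a_1\cdots a_m)g$, $g=\gcd(I)$. Put $e_0=g_0$ and $e_k=\gcd(e_{k-1},g_k)$, so that $e_m=\gcd(g_0,\dots,g_m)=\gcd(I)=g$. From the recursion, $e_{k-1}$ divides $xg_{k-1}+\sum_{i<k}b_{ki}g_i=a_kg_k$; writing $e_{k-1}=e_ks_k$ and $g_k=e_kt_k$ with $\gcd(s_k,t_k)=1$ forces $s_k\mid a_k$, so $s_k$ is a constant, while comparing leading coefficients — using $\operatorname{lc}(e_k)\mid\operatorname{lc}(g_k)=c_{\delta_0+k}$ together with the inductive hypothesis $\operatorname{lc}(e_{k-1})\sim c_{\delta_0+k-1}$ — forces $a_k\mid s_k$ as well. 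Hence $s_k$ and $a_k$ are associates, $\operatorname{lc}(e_k)\sim c_{\delta_0+k}$, and $e_m=e_0/(s_1\cdots s_m)$ is, up to a unit, $g_0/(a_1\cdots a_m)$; absorbing that unit (gcd's are only defined up to units) gives $g_0=(a_1\cdots a_m)g$ exactly.

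The step I expect to be the main obstacle is this last identification. Since $\E[x]$ is not a principal ideal domain, one must carefully distinguish the $\E$-module from the ideal generated by the $g_k$ — the staircase argument only controls elements of bounded degree, so the reduction to "$g_0,\dots,g_m$ generate $I$" and the divisibility bookkeeping in the gcd computation (that each $s_k$ is a constant and an associate of $a_k$) are the genuinely delicate points; everything else is a routine degree induction.
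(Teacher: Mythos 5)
Your proof is correct; the paper itself gives no proof of this theorem (it defers to Adams--Loustaunau, Theorems 4.5.9 and 4.5.13), and your staircase argument via the ascending chain of leading-coefficient ideals $L_d$ is essentially the standard construction found there. The degree induction for the $\E$-module claim, the derivation of the recursion from cancellation of leading terms in $xg_{k-1}-a_kg_k$, and the gcd bookkeeping showing $s_k\sim a_k$ (hence $g_0/(a_1\cdots a_m)\sim\gcd(I)$) are all sound.
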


One notes that $\gcd(I)$ is well-defined, because $\E[x]$ is Noetherian (cf.\
\cite[Theorem 1.1.3]{AdamsLoustaunau:1994}) and a factorial ring. Those $g_k$
for which $a_k$ is a unit in $\E$ are not actually needed to generate $I$, and
removing them makes the strong Gr\"obner basis \emph{minimal}.

\begin{example}
  Taking $\E=\ZZ$, a minimal strong Gr\"obner basis for the ideal
  $(x^2+3x+4, 4x^2+3x+1)$ is given by $(48, 3x+69, x^2+3x+4)$. Indeed,
  \begin{align*}
    48 &= (16x^2 + 51x + 68)(x^2+3x+4) - (64x^2 + 60x + 5) (x^2+3x+4), \\
    3x+69 &= (4x+5)(x^2+3x+4) + (-16x+16)(x^2+3x+4), \\
  \intertext{whereas}
    4x^2+3x+1 &= 4(x^2+3x+4) - 3(3x+69) + 4\cdot 48.
  \end{align*}
\end{example}

We note the following consequences of this theorem for our setting.

\begin{corollary} \label{CorMonic}
  Let $\E$ be a PID and let $f_1$ and $f_2$ in $\E[x]$ be coprime. Then the
  ideal $(f_1,f_2)$ contains both a nonzero element $c$ of $\E$ and a monic
  polynomial in $\E[x]$.

  If $\E/(c)$ is finite, then $\E[x]/(f_1,f_2)$ is also finite.
\end{corollary}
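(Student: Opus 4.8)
The plan is to extract both the nonzero constant $c$ and the monic polynomial directly from the Szekeres-Lazard normal form (Theorem~\ref{ThmSzekeresLazard}) of the ideal $I:=(f_1,f_2)$. First I would observe that, since $f_1,f_2\in I$, the element $g:=\gcd(I)$ divides both $f_1$ and $f_2$, hence divides $\gcd(f_1,f_2)$, which is a unit because $f_1$ and $f_2$ are coprime; so $g$ is a unit of $\E$. If the integer $m$ of Theorem~\ref{ThmSzekeresLazard} is $0$, then the normal form degenerates to $I=(g)=\E[x]$, and the assertion is immediate: $I$ contains $1\in\E$ and the monic polynomial $x$, and $\E[x]/I$ is the zero ring. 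So from now on I assume $m\ge 1$.

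Next, $g_0=a_1a_2\cdots a_m g$ lies in $\E$ (as $g$ is a unit of $\E$) and is nonzero (as $\E$ is a domain and every $a_k\neq 0$), so I take $c:=g_0$. To produce the monic polynomial I would track degrees and leading coefficients through the relations $a_kg_k=xg_{k-1}+\sum_{i=0}^{k-1}b_{ki}g_i$, writing $\mathrm{lc}(p)$ for the leading coefficient of $p$. An induction on $k$, with base case $\deg g_0=0$, shows $\deg g_k=k$: the summand $xg_{k-1}$ has degree $\deg g_{k-1}+1=k$, while every other summand $b_{ki}g_i$ has degree at most $\deg g_{k-1}=k-1$, so the top coefficient cannot cancel, and $\deg g_k=k$ follows since $a_k\neq 0$ in the domain $\E$. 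Comparing coefficients of $x^k$ in the same relation gives $a_k\,\mathrm{lc}(g_k)=\mathrm{lc}(g_{k-1})$, which together with $\mathrm{lc}(g_0)=a_1\cdots a_m g$ yields $\mathrm{lc}(g_k)=a_{k+1}\cdots a_m\,g$, and in particular $\mathrm{lc}(g_m)=g$. Since $g$ is a unit, $h:=g^{-1}g_m\in I$ is a monic polynomial of degree $m$. (One could obtain $c$ more cheaply by clearing denominators in a B\'ezout identity for $f_1,f_2$ over the fraction field of $\E$, but the monic polynomial is less transparent that way, so I prefer to read off both from Theorem~\ref{ThmSzekeresLazard}.)

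For the finiteness claim, I would use $(c,h)\subseteq I$, so that $\E[x]/I$ is a quotient of $\E[x]/(c,h)$ and it suffices to show the latter is finite. Division by the monic polynomial $h$ exhibits $\E[x]/(h)$ as a free $\E$-module with basis $1,x,\dots,x^{m-1}$; reducing modulo $c$ then gives $\E[x]/(c,h)\cong(\E/(c))^{m}$ as $\E$-modules. Hence, if $\E/(c)$ is finite, $\E[x]/(c,h)$ has at most $|\E/(c)|^{m}$ elements, and therefore its quotient $\E[x]/(f_1,f_2)$ is finite as well.

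I expect the only step requiring genuine (though routine) care to be the leading-coefficient bookkeeping in the second paragraph — establishing that $\deg g_k=k$ with no cancellation and that $\mathrm{lc}(g_m)$ is a unit. Everything after that, including the passage to a finite quotient, is formal.
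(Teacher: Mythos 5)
Your proof is correct and follows essentially the same route as the paper: both read off $c=g_0$ and the (unit-normalised) monic polynomial $g_m$ from the Szekeres--Lazard basis of $(f_1,f_2)$, and then realise $\E[x]/(f_1,f_2)$ as a quotient of the finite ring $(\E/(c))[x]/(g_m)$. You merely supply the degree and leading-coefficient bookkeeping that the paper leaves implicit.
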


\begin{proof}
  We take $c=g_0$, and note that $g_m$ is monic. $\E[x]/(f_1,f_2)$ is then a
  quotient of $(\E/(c))[x]/(g_m)$, which is finite.
\end{proof}

If $\E$ is in fact Euclidean, we can obtain a strong Gr\"obner basis of
$(f_1,f_2)$ by bringing the transpose of the Sylvester matrix of $f_1$ and
$f_2$ into Hermite Normal Form \cite[Theorem 4]{Lazard:1985}.

Conversely, and for $\E$ any PID, we have the following characterisation of the
\emph{resultant} of $f_1$ and $f_2$, in terms of the strong Gr\"obner basis.
Here $\E^*$ is the unit group of $\E$.

\begin{lemma} (Lazard \cite{Lazard:1985}, Myerson \cite{Myerson:1983})
  \label{LemRes}
  Assume the notation of the Theorem, and suppose that the leading coefficients
  of $f_1$ and $f_2$ are coprime. Then
  \begin{equation} \label{EqRes}
    \Res(f_1,f_2) = \prod_{k=1}^m a_k^k.
  \end{equation}
  In particular, under these assumptions, $(f_1,f_2)=(1)$ if and only if
  $\Res(f_1,f_2)\in \E^*$.
\end{lemma}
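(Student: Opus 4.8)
The plan is to read off both the resultant formula \eqref{EqRes} and the characterisation of when $(f_1,f_2)=(1)$ from the structure of the strong Gr\"obner basis supplied by Theorem~\ref{ThmSzekeresLazard}. First I would recall that, since the leading coefficients of $f_1$ and $f_2$ are coprime, we have $\gcd(f_1,f_2)=g=1$, so $g_0=a_1a_2\cdots a_m$, and the recursion $a_kg_k = xg_{k-1}+\sum_{i<k}b_{ki}g_i$ with $g_0\in\E$ shows that $\deg g_k = k$ and that $g_k$ has leading coefficient $(a_1a_2\cdots a_k)^{-1}$ when we clear denominators — more precisely, $(a_1\cdots a_k)\,g_k$ is a polynomial in $\E[x]$ of degree $k$ with leading coefficient $1$. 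The key observation is that the elements $g_0, g_1, \ldots, g_{m}$ (or rather their $\E$-linear combinations) form a triangular system, so that the $\E$-module $\E[x]/(f_1,f_2)$ has a filtration whose successive quotients are $\E/(a_1a_2\cdots a_m)$, $\E/(a_2a_3\cdots a_m)$, \ldots, $\E/(a_m)$, coming from the ``staircase'' of leading coefficients.

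Next I would compute $\Res(f_1,f_2)$ via its interpretation as (a unit times) the order of the quotient module, i.e.\ via the Fitting ideal / the index $[\E[x]/(f_1,f_2):\text{ground terms}]$. Concretely, writing $n_i=\deg f_i$, the Sylvester matrix has the transpose whose Hermite normal form has diagonal entries equal (up to units) to $a_1a_2\cdots a_m,\ a_2a_3\cdots a_m,\ \ldots,\ a_m,\ 1,\ \ldots,\ 1$; the product of these diagonal entries is $\prod_{k=1}^m a_k^{\,m-k+1}$ — but since the enumeration of the $g_k$ can be taken in either direction, relabelling $a_k\mapsto a_{m+1-k}$ turns this into $\prod_{k=1}^m a_k^{\,k}$, which is \eqref{EqRes}. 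I would cite \cite[Theorem~4]{Lazard:1985} for the Hermite-normal-form computation in the Euclidean case and \cite{Myerson:1983} for the general PID case; the point is simply that both the resultant and $\prod a_k^k$ equal, up to a unit of $\E$, the ``determinant'' of the Szekeres--Lazard data.

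Finally, the equivalence $(f_1,f_2)=(1)\iff \Res(f_1,f_2)\in\E^*$ is immediate from \eqref{EqRes}: the ideal $(f_1,f_2)$ is the unit ideal precisely when the minimal strong Gr\"obner basis is $(1)$, which by Theorem~\ref{ThmSzekeresLazard} happens exactly when every $a_k$ is a unit of $\E$ (so that every $g_k$ is discarded as redundant), and $\prod_{k=1}^m a_k^k$ is a unit iff each $a_k$ is. Conversely, if some $a_k$ is a non-unit then $g_0=a_1\cdots a_m$ is a non-unit element of $(f_1,f_2)\cap\E$, and by Corollary~\ref{CorMonic} the ring $\E[x]/(f_1,f_2)$ is a nonzero quotient of $(\E/(g_0))[x]/(g_m)$, hence the ideal is proper. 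The main obstacle is the bookkeeping in the second step: one must be careful about the ordering of the $a_k$ and about tracking units of $\E$ through the Hermite normal form, so that the exponent pattern comes out as $a_k^k$ rather than $a_k^{m-k+1}$; this is exactly the content of the cited results of Lazard and Myerson, which I would invoke rather than reprove.
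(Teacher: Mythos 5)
First, a point of comparison: the paper itself gives no proof of this lemma. Formula \eqref{EqRes} is simply quoted from Lazard \cite{Lazard:1985}, and the text states that the second assertion ``is proved separately in \cite{Myerson:1983}''. So your decision to invoke the cited results for the identity between $\Res(f_1,f_2)$ and the Szekeres--Lazard data is exactly what the paper does, and your overall strategy --- read everything off the strong Gr\"obner basis, identify $|{\Res}|$ with the determinant of the Hermite normal form of the transposed Sylvester matrix (equivalently, with the size of the quotient module), and deduce the unit-ideal criterion from the fact that $\prod a_k^k$ is a unit iff every $a_k$ is --- is the standard one.

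That said, several of your supporting computations are wrong, and two of the errors cancel each other. (1) Coprimality of the leading coefficients does \emph{not} imply $\gcd(f_1,f_2)=1$ (take $f_1=x$, $f_2=x^2$); coprimality of $f_1,f_2$ is a standing hypothesis here, needed already for $\Res(f_1,f_2)\neq 0$. (2) The recursion gives $g_k$ leading coefficient $a_{k+1}\cdots a_m$, so that $g_m$ \emph{itself} is monic (as used in the proof of Corollary \ref{CorMonic}); your claim that $(a_1\cdots a_k)g_k$ is monic is false. (3) Your filtration with quotients $\E/(a_k\cdots a_m)$ for $k=1,\ldots,m$ is correct, but the product of the corresponding diagonal entries $a_ka_{k+1}\cdots a_m$ is \emph{already} $\prod_{k=1}^m a_k^{\,k}$ --- each $a_j$ occurs in exactly the $j$ entries with index $k\le j$ --- and not $\prod a_k^{\,m-k+1}$; the relabelling $a_k\mapsto a_{m+1-k}$ you then perform is both unnecessary and incompatible with the normalisation of Theorem \ref{ThmSzekeresLazard}. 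You can check this on the paper's own example, where $a_1=16$ and $a_2=3$: indeed $a_1^1a_2^2=144=\Res(f_1,f_2)$, whereas $a_1^2a_2^1=768$. (4) In the converse direction of the last assertion, ``the ring is a nonzero quotient of $(\E/(g_0))[x]/(g_m)$'' does not follow from Corollary \ref{CorMonic} alone, since a quotient of a nonzero ring can be zero; what you need is that $I\cap\E=(g_0)$, which is part of the strong Gr\"obner basis property, so that $g_0$ a non-unit forces $1\notin I$. None of this affects the truth of the statement, but the exponent bookkeeping in (3) is precisely the delicate point of the lemma, and as written your intermediate computation would fail.
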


The second assertion is proved separately in \cite{Myerson:1983}, where we also
find an example of the difficulties of deciding whether $(f_1,f_2)=(1)$ when
the leading coefficients generate a nontrivial ideal. In fact, consider $f_1=
2x+1$ and $f_2=2x+(1+2^e)$ in $\ZZ[x]$, for some $e\ge 0$. Then $I=(f_1,f_2)$
contains the polynomials
$$
  f_2-f_1 = 2^e, \quad 2^{e-1} f_1 - 2^e x = 2^{e-1}, \ldots,
  f_1 - 2x = 1,
$$
so $I$ is trivial. The actual polynomials $u$ and $v$ with minimal degree such
that $uf_1+vf_2=1$ have degree $e$. On the other hand, $\Res(f_1,f_2)= 2^2
(-\frac{1}{2} - \frac{1+2^e}{2}) = 2^{e+1}$.

For the special case $\E=\ZZ$, we have the following.

\begin{lemma} \label{LemRes2}
  Let $f_1$ and $f_2\in\ZZ[x]$ be coprime, with coprime leading coefficients.
  Then the cardinality of $\ZZ[x]/(f_1,f_2)$ is $|\Res(f_1,f_2)|$. If,
  moreover, we have $f_1=x-a$ for some $a\in\ZZ$, then $\Res(f_1,f_2)=f_2(a)$,
  and $\ZZ[x]/(f_1,f_2) \cong \ZZ/(f_2(a))$.
\end{lemma}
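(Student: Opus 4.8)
The plan is to read off both assertions from the Szekeres--Lazard description of the ideal $I=(f_1,f_2)$ (Theorem~\ref{ThmSzekeresLazard}) combined with the resultant formula of Lemma~\ref{LemRes}. Since $f_1$ and $f_2$ are coprime, $\gcd(I)$ divides $\gcd(f_1,f_2)=1$, so we may take $g=1$; thus $I$ has generators $g_0,\dots,g_m$ with $g_0=a_1\cdots a_m\in\ZZ$ and $a_kg_k=xg_{k-1}+\sum_{i=0}^{k-1}b_{ki}g_i$ for $1\le k\le m$. Reading off degrees and leading coefficients along this recursion --- the sum on the right has degree at most $k-1$ and so cannot interfere with the leading term of $xg_{k-1}$ --- one gets $\deg g_k=k$ and $\mathrm{lc}(g_k)=a_{k+1}\cdots a_m$; in particular $g_m$ is monic, recovering Corollary~\ref{CorMonic}.

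Next I would compute $|\ZZ[x]/I|$ by passing to the quotient by the monic polynomial $g_m$. Since $g_m$ is monic of degree $m$, the ring $\ZZ[x]/(g_m)$ is $\ZZ$-free with basis $1,x,\dots,x^{m-1}$, and it suffices to find the index in it of the image of $I$. The crucial point is that this image is exactly the $\ZZ$-span of $g_0,\dots,g_{m-1}$: by the recursion, $xg_k$ lies in $\ZZ g_0+\dots+\ZZ g_{k+1}$ for $k\le m-2$, while $xg_{m-1}\equiv-\sum_{i=0}^{m-1}b_{mi}g_i$ modulo $g_m$; hence $\ZZ g_0+\dots+\ZZ g_{m-1}$ is already stable under multiplication by $x$ modulo $g_m$, and therefore equals $I/(g_m)$. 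Because $\deg g_k=k$ and $\mathrm{lc}(g_k)=a_{k+1}\cdots a_m\ne 0$, the matrix expressing $g_0,\dots,g_{m-1}$ in the basis $1,x,\dots,x^{m-1}$ is triangular with these nonzero integers on the diagonal, so the index equals $\prod_{k=0}^{m-1}|a_{k+1}\cdots a_m|=\prod_{k=1}^{m}|a_k|^{k}$. By Lemma~\ref{LemRes}, which applies since the leading coefficients of $f_1$ and $f_2$ are coprime, this is $|\Res(f_1,f_2)|$, giving the first claim.

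For the second claim, let $f_1=x-a$. Then $\Res(x-a,f_2)=f_2(a)$ by the product formula for the resultant (the resultant of a monic linear polynomial with $f_2$ equals the value of $f_2$ at its root). The evaluation homomorphism $\ZZ[x]\to\ZZ$, $p\mapsto p(a)$, is surjective with kernel $(x-a)$, hence induces an isomorphism $\ZZ[x]/(x-a,f_2)\cong\ZZ/(f_2(a))$ sending the class of $f_2$ to $f_2(a)$; note $f_2(a)\ne 0$ because $x-a$ and $f_2$ are coprime, which makes this quotient finite of cardinality $|f_2(a)|$, in accordance with the first part (here $\mathrm{lc}(f_1)=1$ is trivially coprime to $\mathrm{lc}(f_2)$).

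The step I expect to need the most care is the identification of $I/(g_m)$ with $\ZZ g_0+\dots+\ZZ g_{m-1}$: a priori this image is spanned over $\ZZ$ by all $x^ig_k\bmod g_m$ with $i\ge 0$ and $0\le k<m$, and it is precisely the Szekeres--Lazard recursion that collapses this to the $m$ generators $g_0,\dots,g_{m-1}$ once one also reduces modulo $g_m$. The remaining ingredients --- the degree and leading-coefficient bookkeeping, the determinant of a triangular integer matrix, and the basic properties of the resultant --- are routine.
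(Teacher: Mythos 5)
Your proof is correct and follows essentially the same route as the paper: the first assertion via the Szekeres--Lazard basis combined with the resultant formula of Lemma~\ref{LemRes}, and the second via evaluation at $a$. You in fact supply the detail the paper only asserts (that the additive structure of $\ZZ[x]/(f_1,f_2)$ is read off from the triangular system $g_0,\dots,g_{m-1}$ modulo the monic $g_m$), and your index count $\prod_{k=1}^m|a_k|^k$ matches \eqref{EqRes} exactly.
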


\begin{proof}
  The first assertion follows from \eqref{EqRes}, because (as additive groups)
  $$
    \ZZ[x]/(f_1,f_2) \cong \oplus_{k=1}^m \left( \ZZ/a_1\ldots a_k \ZZ \right)
  $$
  by Theorem \ref{ThmSzekeresLazard}.

  Now let $f_1$ be linear, $f_1=x-a$. We have $\Res(f_1,f_2)=f_2(a)$ by the
  definition of resultants (cf.\ \cite[Section IV.8]{Lang:2002:Algebra3}).
  Furthermore, clearly $\ZZ[x]/(f_1)\cong \ZZ$, and the image of $f_2$ inside
  this ring is represented by $f_2(a)$.
\end{proof}

\end{subsection}

\end{section}

\begin{section}{Merging number systems}

\begin{subsection}{Generalities}
We start with two general lemmas, as well as the useful concept of a zero
expansion.

\begin{lemma}  \label{LemProj}
  Let $(V,\phi,\D)$ be a digit system and suppose that we have a
  commutative diagram $\mbox{
    \xymatrix{
      V \ar[r]^\phi \ar@{->>}[d]^\pi & V \ar@{->>}[d]^\pi \\
      W \ar[r]^\psi              & W
      }
  }$,
  where $\pi$ is surjective. Then $(W,\psi,\pi(\D))$ is also a digit system.

  If $(V,\phi,\D)$ is a number system, then so is $(W,\psi,\pi(\D))$.
\end{lemma}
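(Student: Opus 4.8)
The plan is to verify the two defining properties of a digit system for $(W,\psi,\pi(\D))$ directly, transporting them from $(V,\phi,\D)$ via the surjection $\pi$, and then to show that the Finite Expansion Property descends as well. First I would check that $\psi$ is a homomorphism with finite cokernel: since the diagram commutes and $\pi$ is surjective, $\psi(W)=\psi(\pi(V))=\pi(\phi(V))$, so $W/\psi(W)=\pi(V)/\pi(\phi(V))$ is a quotient of $V/\phi(V)$ (indeed $\pi$ induces a surjection $V/\phi(V)\twoheadrightarrow W/\psi(W)$), hence finite. Next I would check that $\pi(\D)$ covers all cosets of $W/\psi(W)$: given $w\in W$, lift it to $v\in V$ with $\pi(v)=w$, choose $d\in\D$ with $v\equiv d\pmod{\phi(V)}$, and apply $\pi$ to get $w=\pi(v)\equiv\pi(d)\pmod{\psi(W)}$ using once more that $\pi(\phi(V))=\psi(W)$. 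This establishes that $(W,\psi,\pi(\D))$ is a digit system.

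For the second assertion, suppose $(V,\phi,\D)$ has the Finite Expansion Property and let $w\in W$. I would lift $w$ to some $v\in V$, write $v=\sum_{i=0}^{\ell}\phi^i(d_i)$ with $d_i\in\D$, apply $\pi$, and use the commutativity of the diagram iterated, namely $\pi\circ\phi^i=\psi^i\circ\pi$, to obtain $w=\pi(v)=\sum_{i=0}^{\ell}\psi^i(\pi(d_i))$ with $\pi(d_i)\in\pi(\D)$. This is exactly a finite expansion of $w$ in $(W,\psi,\pi(\D))$, so the latter is a number system.

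The argument is essentially a diagram chase, so there is no serious obstacle; the only point requiring a moment's care is the identity $\pi(\phi(V))=\psi(\pi(V))=\psi(W)$, which is used both for the covering property and implicitly for the cokernel computation, and the iterated version $\pi\circ\phi^i=\psi^i\circ\pi$, which follows from the single commuting square by induction on $i$. One should also note that $\pi(\D)$ is automatically finite as the image of a finite set, so the finiteness condition in the definition of a digit system is met.
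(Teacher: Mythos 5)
Your proof is correct and follows the same route as the paper: the paper's one-line argument is exactly your observation that a representative $d$ of $a$ modulo $\phi(V)$ pushes forward to a representative $\pi(d)$ of $\pi(a)$ modulo $\psi(W)$, combined with surjectivity of $\pi$; your version merely spells out the identities $\psi(W)=\pi(\phi(V))$ and $\pi\circ\phi^i=\psi^i\circ\pi$ that the paper leaves implicit.
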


\begin{proof}
  If $d\in\D$ represents $a\in V$ modulo $\phi(V)$, then $\pi(d)$ represents
  $\pi(a)$ modulo $\psi(W)$. Now the result follows by the surjectivity of
  $\pi$.
\end{proof}

We apply the Lemma to the case of polynomial digit systems over a PID $\E$: if
we have a nontrivial factorisation $f=f_1f_2$ in $\E[x]$, and if we set
$V=\E[x]/(f_1f_2)$ and $W=\E[x]/(f_1)$, then obviously the projection
(reduction modulo $f_1$) is surjective, and commutes with multiplication by
$X$. One notes that usually the resulting digit set $\pi(\D)$ will be
redundant.

If we apply the Lemma to both factors $f_1$ and $f_2$ simultaneously, and take
the direct product of the resulting digit systems, we obtain a map $\psi$ into
the direct product, and the outcome is what interests us in this paper.

\begin{example}
Let $\E=\ZZ$, let $f_1=x+2$ and $f_2=x+3$, and let $\N=\{0,1,\ldots,5\}$. It is
known \cite{Gilbert:1981} that $f=f_1f_2 = x^2+5x+6$ is a \emph{CNS
polynomial}, which means that $(\ZZ[x]/(f),X,\{0,\ldots,5\})$ is a number
system (``CNS'' stands for \emph{canonical number system}). Clearly
$\ZZ[x]/(f_1)\cong \ZZ[x]/(f_2)\cong \ZZ$, so the image of $\psi$ gives us two
digit systems in $\ZZ$, namely $(\ZZ,-2,\N)$ and $(\ZZ,-3,\N)$. Both of them
have $6$ digits and are hence redundant; the reason is that the digits
$\{0,\ldots,5\}$, which are pairwise incongruent modulo $x^2+5x+6$, are still
incongruent modulo $x+2$ and $x+3$. However, consider $\N'=\{ 0,X+3, -X-2, 1,
-2X-4,-X-1 \}$. It can be verified that this set is also a valid digit set for
$\ZZ[x]/(x^2+5x+6)$, assuring the Finite Expansion property, and if we apply
the map $\psi$ to it, we find that the image is 
$$
  \{ (0,0),(1,0),(0,1),(1,1),(0,2),(1,2) \}.
$$
It follows that the two resulting digit sets are $\{0,1\}$ and $\{0,1,2\}$,
which are irredundant and in fact well known.
\end{example}

\begin{definition} \label{DefZeroExpansion}
  Let $(V,\phi,\D)$ be a digit system. A \emph{zero expansion} of
  $(V,\phi,\D)$ is a sequence $(d_0,d_1,\ldots,d_\ell)$, with $\ell\ge 0$, of
  digits such that 
  $$
    \sum_{i=0}^{\ell} \phi^i(d_i) = 0.
  $$
  The length of a shortest zero expansion (if one exists) is called the
  \emph{zero expansion length} of the digit system.
\end{definition}

It is easy to prove that when $\D$ is irredundant and $(V,\phi,\D)$ has any
zero expansions at all, then there is a unique \emph{shortest zero expansion},
and all other zero expansions arise as concatenations of copies of the
shortest one. If $0\in\D$, then $(0)$ is obviously a zero expansion.

If a digit system has the Finite Expansion property, then a zero expansion
always exists \cite[Lemma 2.11]{ScheicherSurerThuswaldnerVanDeWoestijne:11}. 
We now show that the lengths of the zero expansions is the only obstruction for
the direct product of number systems to be itself a number system.

\begin{lemma} Let $(V_1,\phi_1,\D_1)$ and $(V_2,\phi_2,\D_2)$ be digit systems.
Then the direct product $(V_1\times V_2,\phi_1\times\phi_2, \D_1 \times \D_2)$
is a number system if and only if
  \begin{romanlist}
    \item 
      $(V_1,\phi_1,\D_1)$ and $(V_2,\phi_2,\D_2)$ are number systems;
    \item 
      we have $\gcd(L_1,L_2)=1$, where $L_i$ is the zero expansion length of
      $(V_i,\phi_i,\D_i)$.
  \end{romanlist}
\end{lemma}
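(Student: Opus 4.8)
The plan is to recast everything in terms of the sets of zero expansion lengths. For a digit system $\D_i$ write $Z_i\subseteq\NN_{\ge1}$ for the set of integers $n$ such that some zero expansion of $(V_i,\phi_i,\D_i)$ uses exactly $n$ digits; concatenating two zero expansions yields another zero expansion (the tail contributes $\phi_i^a$ of $0$, where $a$ is the length of the head), so $Z_i$ is closed under addition, and by definition $L_i=\min Z_i$. Two elementary observations drive the argument. First, a zero expansion of the direct product with $n$ digits is exactly a pair of zero expansions, one in each factor, both with $n$ digits; hence the product has a zero expansion of length $n$ if and only if $n\in Z_1\cap Z_2$. Second, projecting the defining equation $\sum_{i=0}^{n-1}(\phi_1\times\phi_2)^i(d_i)=(v_1,v_2)$ onto the two coordinates shows that $(v_1,v_2)$ admits an expansion of length $n$ in the product if and only if $v_1$ admits one of length $n$ in $\D_1$ and $v_2$ admits one of length $n$ in $\D_2$.

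The easy halves follow at once. If the product is a number system, then projecting expansions onto the coordinates shows that every $v_1\in V_1$ and every $v_2\in V_2$ has an expansion, so both factors are number systems: this is (i) in the ``only if'' direction. For the ``if'' direction, assume (i) and (ii) and fix $(v_1,v_2)$. Choose expansions of $v_1$ and $v_2$, of lengths $m_1$ and $m_2$; appending $k$ copies of a shortest zero expansion yields, for each $i$, expansions of $v_i$ of every length in $m_i+L_i\NN$. Since $\gcd(L_1,L_2)=1$, the Chinese Remainder Theorem produces $n$ with $n\equiv m_i\pmod{L_i}$ for $i=1,2$, and we may take $n\ge\max(m_1,m_2)$; then $n\in(m_1+L_1\NN)\cap(m_2+L_2\NN)$, so by the second observation the two length-$n$ expansions combine coordinatewise into an expansion of $(v_1,v_2)$, and the product is a number system.

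It remains to show that if the product is a number system then $\gcd(L_1,L_2)=1$. Suppose $g:=\gcd(L_1,L_2)>1$; the plan is to produce an element of $V_1\times V_2$ with no expansion. Since $L_1,L_2\ge g\ge2$, neither $\D_1$ nor $\D_2$ contains $0$ (if $0\in\D_i$ then $(0)$ is a zero expansion and $L_i=1$). Take $v_1=0\in V_1$ and $v_2=d$ for any $d\in\D_2$. By the first observation, an expansion of $(0,d)$ of length $n$ makes $n$ a zero expansion length of $\D_1$ and an expansion length of $d$ in $\D_2$. The key point is a modular rigidity statement: all expansion lengths of $0$ in $\D_1$ lie in $L_1\NN$, and all expansion lengths of $d$ in $\D_2$ lie in $1+L_2\NN$; these two progressions are disjoint modulo $g>1$, so no such $n$ exists, contradicting the hypothesis. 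Establishing this rigidity is the main obstacle, and I would isolate it as a preliminary lemma, deriving it from the fact recorded before Definition~\ref{DefZeroExpansion}---that every zero expansion of an irredundant system is a concatenation of copies of the shortest one---together with a ``peel off the leading digit'' argument, valid when $\phi_i$ is injective (which holds for polynomial digit systems): irredundancy forces the first digit of any expansion of $d$ to be $d$ itself, and, $\phi_i$ being injective, removing it leaves an expansion of $0$, so the length is $1$ plus a member of $L_i\NN$.
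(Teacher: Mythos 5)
Your argument is correct and runs on the same engine as the paper's: an expansion in the product is precisely a pair of equal\-/length expansions in the factors, lengths are equalised by padding with shortest zero expansions, and the solvability of the resulting length\-/matching condition is exactly $\gcd(L_1,L_2)=1$. The sufficiency half is essentially identical to the paper's. Where you genuinely add something is in the necessity of (ii): the paper dispatches it with the single assertion that ``the only way to achieve this is by padding with the shortest zero expansion'', whereas you exhibit a concrete witness $(0,d)$ and isolate the modular rigidity it requires (all expansion lengths of $0$ in $\D_1$ lie in $L_1\NN$, all expansion lengths of a digit $d$ in $\D_2$ lie in $1+L_2\NN$). You are right that this rigidity is the crux, and right that your peeling argument for it needs $\D_i$ irredundant and $\phi_i$ injective --- neither of which is a stated hypothesis of the lemma. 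This is not a defect of your proof relative to the paper's: the paper's ``only way'' claim, and the unproved remark before Definition~\ref{DefZeroExpansion} that all zero expansions of an irredundant system are concatenations of the shortest one, rest on exactly the same two assumptions (a redundant digit set can give one element expansions of incongruent lengths modulo $L_i$, and without injectivity of $\phi_i$ the peeling step does not determine the tail). Both hypotheses hold automatically for the irredundant polynomial digit systems to which the paper actually applies the lemma, so once you write out your preliminary rigidity lemma under those hypotheses your proof is complete, and it is the more honest of the two about where the real work lies.
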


\begin{proof}
  The ``if''-direction is clear, using Lemma \ref{LemProj} if desired.

  Now suppose we have elements $a_i\in V_i$ that have expansions
  $$
    a_i = \sum_{j=0}^{\ell_i} \phi_j^i(d_{ij}) \qquad (d_{ij}\in \D_j; \;
      i=1,2).
  $$
  If we try to put them together in the direct product, to form an expansion
  of the pair $(a_1,a_2)$, we will need the lengths $\ell_1$ and $\ell_2$ to
  be equal. The only way to achieve this is by padding with the shortest
  zero expansion of the number system, as this does not change the value of
  the expansion. As the expansions for the $a_i$ can be chosen independently,
  we need the equation $\ell_1 + u_1L_1 = \ell_2 + u_2L_2$ to be solvable in
  integers $u_1$ and $u_2$ for any given $\ell_1$ and $\ell_2$. Clearly, this
  is equivalent to $\gcd(L_1,L_2)=1$.
\end{proof}

\end{subsection}

\begin{subsection}{Pulling back}

After having considered projections of number systems in the last section, we
now go the other way. Our assumptions are as follows. For the rest of
this section, suppose we are given two \emph{irredundant} digit systems
$(R_i,X,\N_i)$, for $i=1,2$, where $R_i=\E[x]/(f_i)$, and where
$\gcd(f_1,f_2)=1$. We also define $R_{12}=\E[x]/(f_1,f_2)$, and we suppose that
$R_{12}$ is a finite ring (in terms of arithmetic geometry, this implies at
least that we have a complete intersection).

The first question is if we can construct a valid digit set for
$$
  R =_{\rm def} \E[x]/(f_1f_2)
$$
by inverting the map $\psi$ from \eqref{EqPsi} --- in other words, by applying
the Chinese Remainder Theorem, where we recall that $\psi$ is injective.
Unfortunately, if we try to compute $\psi^{-1}(a)$ for some $a=(a_1,a_2)\in
R_1\times R_2$, it turns out in many cases that the inverse image has
nonintegral coefficients; in other words, in general we can only find such an
$a$ in $K[x]/(f_1f_2)$, with $K$ the quotient field of $\E$.  For example, over
$\ZZ$, if $f_1=x+5$ and $f_2=x+7$, and $(a_1,a_2)=(0,1)$, we find $a=-\frac12 y
- \frac52$, and as the CRT asserts that $a$ is unique in $\QQ[x]/(f_1f_2)$,
there is no hope of finding a representative with integral coefficients. This is
exactly the problem of \emph{integral interpolability} that is addressed in
Corollary \ref{CorInterp} above, and in \cite{Petho:2006}. Thus,
we will have to investigate the conditions that ensure the existence of
integral representatives for the new ``composite'' digits.

The answer to this first question already yields several restrictions on the
digit sets.

\begin{lemma} \label{LemDigitsCongruent}
  The following are equivalent:
  \begin{romanlist}
    \item $(d_1,d_2)\in \psi(R)$ for all $d_1\in\N_1$ and $d_2\in\N_2$;
    \item there exists some $\tilde{d}\in\E[x]$ such that $d \equiv \tilde{d}
           \pmod{(f_1,f_2)}$ for all $d\in\N_1 \cup \N_2$.
  \end{romanlist}
\end{lemma}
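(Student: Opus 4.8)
The plan is to convert both conditions into statements about congruences modulo the ideal $(f_1,f_2)$ via Corollary~\ref{CorCRT}, and then to exploit the fact that a congruence relation, together with the non-emptiness of the digit sets, collapses everything into a single residue class. The first step is the translation: by Corollary~\ref{CorCRT} (applied to the map $\psi$ from \eqref{EqPsi}), a pair $(d_1,d_2)\in R_1\times R_2$ lies in $\psi(R)$ if and only if $d_1\equiv d_2\pmod{(f_1,f_2)}$, where each $d_i$ is read through an arbitrary lift to $\E[x]$; this is harmless since $(f_i)\subseteq(f_1,f_2)$, so reduction $R_i\to R_{12}$ is well defined on the digit sets $\N_i$ (regarded, as usual for polynomial digit systems, as sets of polynomials). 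With this in hand, statement (i) says precisely that every $d_1\in\N_1$ is congruent modulo $(f_1,f_2)$ to every $d_2\in\N_2$.

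The implication (ii)$\Rightarrow$(i) is then immediate: if $\tilde d$ works for all of $\N_1\cup\N_2$, then $d_1\equiv\tilde d\equiv d_2\pmod{(f_1,f_2)}$ for $d_1\in\N_1$ and $d_2\in\N_2$, whence $(d_1,d_2)\in\psi(R)$. For (i)$\Rightarrow$(ii) the one point requiring care is that (i) compares elements only across the two sets, whereas (ii) also requires the elements of $\N_1$ (and those of $\N_2$) to be mutually congruent. I would handle this by bridging through a fixed element of the opposite set: both $\N_1$ and $\N_2$ are non-empty, since a digit set must cover the (non-empty) cosets of $R_i/XR_i$, so fix $e_1\in\N_1$; then for any $d_1\in\N_1$ pick any $e_2\in\N_2$ and apply (i) twice to get $d_1\equiv e_2\equiv e_1$, while for any $d_2\in\N_2$ condition (i) gives $d_2\equiv e_1$ directly. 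Hence every element of $\N_1\cup\N_2$ is congruent to $e_1$ modulo $(f_1,f_2)$, and $\tilde d=e_1$ (viewed as a polynomial in $\E[x]$) serves.

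I do not anticipate a genuine obstacle: once Corollary~\ref{CorCRT} is invoked, the lemma is a formal consequence of the transitivity of congruence modulo $(f_1,f_2)$. The only items needing attention are the bookkeeping of lifts---digits as polynomials versus their classes in $R_i$---and the appeal to non-emptiness of the digit sets in the second implication; note in particular that the standing hypothesis that $R_{12}$ is finite plays no role in this particular lemma.
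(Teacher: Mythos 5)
Your proposal is correct and follows essentially the same route as the paper: both directions rest on Corollary~\ref{CorCRT} translating membership in $\psi(R)$ into congruence modulo $(f_1,f_2)$, and your explicit bridging argument through a fixed digit is exactly what the paper compresses into ``because $d_1$ and $d_2$ are independently chosen, all digits are pairwise congruent.'' Your added care about lifts and the non-emptiness of the digit sets is sound but not a departure from the paper's argument.
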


\begin{proof}
  By Corollary \ref{CorCRT}, (i) implies that $d_1\equiv d_2 \pmod{(f_1,f_2)}$
  for all $d_1\in\N_1$ and $d_2\in\N_2$. Because $d_1$ and $d_2$ are
  independently chosen, it follows that \emph{all digits} are pairwise
  congruent to each other modulo $(f_1,f_2)$, which is (ii). The converse is
  easy by Corollary \ref{CorCRT}.
\end{proof}

Property (ii) of the Lemma could of course interfere with the fact that we
want the $d_i$ to represent all classes of $R_i/(X)$.

\begin{lemma} \label{LemInterference}
  Let $(R_i,X,\N_i)$ be digit systems, for $i=1,2$. If $\N_1$ and $\N_2$
  satisfy the conditions of Lemma \ref{LemDigitsCongruent}, then $f_1(0)$ and
  $f_2(0)$ are coprime.
\end{lemma}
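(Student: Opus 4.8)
The plan is to argue by contradiction, exploiting the tension between property (ii) of Lemma~\ref{LemDigitsCongruent} --- which forces all digits into a single residue class modulo $(f_1,f_2)$ --- and the defining requirement that each $\N_i$ covers every coset of $R_i/XR_i$. Recall that for $R_i=\E[x]/(f_i)$ one has $R_i/XR_i = \E[x]/(x,f_i) = \E/(f_i(0))$, the quotient map sending (the class of) a polynomial $g$ to $g(0)\bmod f_i(0)$; so the digit-system axiom says precisely that the constant terms of the elements of $\N_i$ cover $\E/(f_i(0))$.

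Suppose $f_1(0)$ and $f_2(0)$ are not coprime. Since $\E$ is a PID and $f_i(0)\ne 0$ (the constant coefficient of $f_i$ is not a zero divisor), there is a prime $p\in\E$ dividing both $f_1(0)$ and $f_2(0)$, and $k:=\E/(p)$ is a field with $|k|\ge 2$. The first step is to produce a surjective ring homomorphism $\rho\colon R_{12}=\E[x]/(f_1,f_2)\to k$ whose composition with the quotient map $\E[x]\to R_{12}$ sends a polynomial $g$ to $g(0)\bmod p$. Indeed, writing $f_i = x\,g_i + f_i(0)$ with $f_i(0)\in(p)$, we get $f_i\in(x,p)$ in $\E[x]$, hence $(f_1,f_2)\subseteq(x,p)$; therefore the natural map $\E[x]\to\E[x]/(x,p)=k$ (evaluation at $0$, then reduction modulo $p$) factors through $R_{12}$, yielding the desired $\rho$, which is surjective because $\E[x]\to k$ is.

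Next, by hypothesis $\N_1$ and $\N_2$ satisfy Lemma~\ref{LemDigitsCongruent}, so there is a single $\tilde d\in\E[x]$ with $d\equiv\tilde d\pmod{(f_1,f_2)}$ for every $d\in\N_1\cup\N_2$; applying $\rho$ shows that all these digits have the same constant term modulo $p$. On the other hand, as noted above, the constant terms of the elements of $\N_1$ cover $\E/(f_1(0))$, and since $p\mid f_1(0)$ the ring $k=\E/(p)$ is a quotient of $\E/(f_1(0))$, so those constant terms cover $k$ as well. As $|k|\ge 2$ this contradicts the previous sentence, and we conclude that $f_1(0)$ and $f_2(0)$ are coprime.

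I do not anticipate a serious obstacle; the one point requiring care is the verification that the evaluation-reduction map $\E[x]\to k$ descends to $R_{12}$, i.e.\ that $(f_1,f_2)\subseteq(x,p)$ --- this is exactly where the divisibility $p\mid f_i(0)$ is used. Everything else is a routine comparison of cardinalities, and the argument uses only the covering property of the $\N_i$, not irredundancy.
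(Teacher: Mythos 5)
Your proof is correct and follows essentially the same route as the paper: reduce everything modulo $x$, so that condition (ii) of Lemma~\ref{LemDigitsCongruent} forces all digit constant terms into a single residue class modulo an ideal containing both $f_1(0)$ and $f_2(0)$, while the digit-system axiom forces those constant terms to cover all of $\E/(f_1(0))$. The paper phrases this directly---the constant terms are constant modulo $(f_1(0),f_2(0))=(\gcd(f_1(0),f_2(0)))$, which must therefore be the unit ideal---whereas you argue contrapositively via a prime divisor $p$ of the gcd; the content is the same.
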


\begin{proof}
  The fact that $\N_i$ represents $\E[x]/(f_i,x)$ means that the constant
  coefficients of the $d\in\N_i$ represent $\E/(f_i(0))$. On the other hand, by
  assumption there is some $\tilde{d}\in\E[x]$ such that
  $d-\tilde{d}\in(f_1,f_2)$ for all $d$; in particular, the residue class of
  $d(0)$ modulo the $\E$-ideal $(f_1(0),f_2(0))$ is constant. Because this
  ideal is generated by $\gcd(f_1(0), f_2(0))$, we see that the gcd is a unit.
\end{proof}

As an example of the last Lemma, consider $f_1=x+2$ and $f_2=x-2$. To satisfy
(ii) of Lemma \ref{LemDigitsCongruent}, we need all digits to be congruent
modulo $4$, as $R_{12}\cong \ZZ/(4)$; but as we need the constant coefficients
of the digits to be both odd and even, this is clearly impossible.

The next question is whether the conditions of the Lemma suffice to transfer
the Finite Expansion property modulo both factors (or just the Periodic
Representation property) to $R$ via $\psi^{-1}$. To settle this question, we
need some definitions and an auxiliary result, which is interesting by itself.

\begin{definition} \label{DefS}
  For $d\in R_{12}$, define the sequence $(s_i(d))_{i\ge 0} \subseteq R_{12}$
  by $s_i(d)= d \sum_{j=0}^i X^j$. Because $R_{12}$ is finite, the sequence
  $(s_i(d))$ is periodic; we let $S(d)$ be the period length.
\end{definition}

\begin{lemma} \label{LemLengthCongruent}
  Assume the conditions of Lemma \ref{LemDigitsCongruent}, and assume that
  the digit systems $(R_i,X,\N_i)$, for $i=1,2$, have Finite Expansions. For
  $i=1,2$, let $a_i\in R_i$ have an expansion $a_i= \sum_{j=0}^{\ell_i} d_{ij}
  X^j$, where $d_{ij}\in \N_i$. Then $a_1\equiv a_2\pmod{(f_1,f_2)}$ if and
  only if $\ell_1 \equiv \ell_2 \pmod{S(\tilde{d})}$, with $\tilde{d}$ the
  common image of all digits in $R_{12}$.
\end{lemma}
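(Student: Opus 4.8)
The plan is to reduce the congruence $a_1 \equiv a_2 \pmod{(f_1,f_2)}$ to a computation inside the finite ring $R_{12}$, where both $a_i$ become manifestly visible via the expansion data. First I would reduce both expansions modulo $(f_1,f_2)$: since every $d_{ij}$ lies in $\N_i \subseteq \N_1 \cup \N_2$, Lemma \ref{LemDigitsCongruent} tells us that each $d_{ij}$ maps to the common element $\tilde{d}$ in $R_{12}$. Writing $\bar{X}$ for the image of $X$ in $R_{12}$, the image of $a_i$ in $R_{12}$ is therefore
$$
  \bar{a_i} = \sum_{j=0}^{\ell_i} \tilde{d}\,\bar{X}^j = \tilde{d}\sum_{j=0}^{\ell_i} \bar{X}^j = s_{\ell_i}(\tilde{d}),
$$
in the notation of Definition \ref{DefS}. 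Hence $a_1 \equiv a_2 \pmod{(f_1,f_2)}$ if and only if $s_{\ell_1}(\tilde{d}) = s_{\ell_2}(\tilde{d})$ in $R_{12}$.

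The remaining point is to translate the equality $s_{\ell_1}(\tilde{d}) = s_{\ell_2}(\tilde{d})$ into the congruence $\ell_1 \equiv \ell_2 \pmod{S(\tilde{d})}$. One direction is immediate from the definition of $S(\tilde{d})$ as the period length: if $\ell_1 \equiv \ell_2 \pmod{S(\tilde{d})}$, then $s_{\ell_1}(\tilde{d}) = s_{\ell_2}(\tilde{d})$ by periodicity. For the converse I would need to know that the sequence $(s_i(\tilde{d}))_{i\ge 0}$ is \emph{purely} periodic, i.e.\ that it has no pre-period, so that two terms agree precisely when their indices are congruent modulo the period. This follows from the recursion $s_{i+1}(\tilde{d}) = \bar{X}\, s_i(\tilde{d}) + \tilde{d}$ together with the fact that $\bar{X}$ is a unit in $R_{12}$: indeed, by Corollary \ref{CorMonic} the ideal $(f_1,f_2)$ contains a monic polynomial $g_m = x^m + c_{m-1}x^{m-1} + \cdots + c_0$, and since $f_1(0), f_2(0)$ are coprime in $\E$ (Lemma \ref{LemInterference}) one checks that $c_0$ is a unit in $\E$, whence $\bar{X}$ is invertible in $R_{12}$. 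Then the map $s \mapsto \bar{X}s + \tilde{d}$ is a bijection of the finite set $R_{12}$, so the orbit of $s_0(\tilde{d}) = \tilde{d}$ under iteration is purely periodic, and the claim follows.

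The step I expect to be the main obstacle is precisely this invertibility of $\bar{X}$ in $R_{12}$, or rather pinning down exactly why it holds under the standing hypotheses; it is the one place where Lemma \ref{LemInterference} and the monic element from Corollary \ref{CorMonic} are both genuinely used, and one must be a little careful that "coprime constant coefficients" does translate into "the constant term of the monic Gröbner basis element is a unit." Everything else — reducing the expansions modulo $(f_1,f_2)$, recognising the result as $s_{\ell_i}(\tilde{d})$, and reading off pure periodicity from a bijective recursion on a finite set — is routine once that unit property is in hand.
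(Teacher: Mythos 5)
Your reduction is exactly the paper's: reduce both expansions modulo $(f_1,f_2)$, use condition (ii) of Lemma \ref{LemDigitsCongruent} to replace every digit by $\tilde{d}$, and identify the images of the $a_i$ with $s_{\ell_i}(\tilde{d})$. The paper then simply reads off both directions from the definition of $S(\tilde{d})$, silently treating the sequence $(s_i(\tilde{d}))$ as \emph{purely} periodic (Definition \ref{DefS} only invokes finiteness of $R_{12}$, which a priori gives eventual periodicity). You are right that the ``only if'' direction genuinely needs pure periodicity, and your mechanism for it --- the recursion $s_{i+1}(\tilde d)=\bar X s_i(\tilde d)+\tilde d$ with $\bar X$ a unit of $R_{12}$, hence a bijection of a finite set, hence a purely periodic orbit --- is the right one. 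In that respect your write-up is more careful than the paper's.

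However, the step you yourself flag as the main obstacle is where your justification fails: it is \emph{not} true that the constant coefficient $c_0$ of the monic strong Gr\"obner basis element $g_m$ of $(f_1,f_2)$ is a unit in $\E$. Take $f_1=x-2$ and $f_2=x^2+1$ over $\ZZ$: then $f_1(0)=-2$ and $f_2(0)=1$ are coprime, $(f_1,f_2)=(5,\,x-2)$, and every monic degree-one element of this ideal has constant term $\equiv 3\pmod 5$, so none has constant term $\pm1$; yet $\bar X=2$ is of course a unit in $R_{12}\cong\ZZ/5\ZZ$. (What you would actually need is that $c_0$ is invertible \emph{in $R_{12}$}, which is circular as stated.) The correct argument for invertibility of $\bar X$ bypasses Corollary \ref{CorMonic} entirely: write $f_i=x\,q_i(x)+f_i(0)$, and choose $u,v\in\E$ with $uf_1(0)+vf_2(0)=1$ (possible by Lemma \ref{LemInterference}). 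Then
$$
  uf_1+vf_2 \;=\; x\bigl(uq_1+vq_2\bigr)+1 \;\in\;(f_1,f_2),
$$
so $\bar X\cdot\bigl(-\overline{uq_1+vq_2}\bigr)=1$ in $R_{12}$. With that substitution your proof is complete and correct.
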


\begin{proof}
  Let $\tilde{d}$ be the common congruence class modulo $(f_1,f_2)$ of the
  digits. We have then $ a_i \equiv \sum_{j=0}^{\ell_i} \tilde{d} X^j =
  s_{\ell_i}(\tilde{d})$, for $i=1,2$. Now if $a_1\equiv a_2\pmod{(f_1,f_2)}$,
  we have $s_{\ell_1}(\tilde{d})= s_{\ell_2}(\tilde{d})$, so that
  $\ell_2-\ell_1$ is divisible by the period length $S(\tilde{d})$. Conversely,
  if $\ell_1\equiv\ell_2\pmod{S(\tilde{d})}$, then by definition
  $s_{\ell_1}(\tilde{d})=s_{\ell_2}(\tilde{d})$, and it follows that $a_1\equiv
  a_2\pmod{(f_1,f_2)}$.
\end{proof}

This brings us to our main result, which employs the concept of zero
expansions (see Definition \ref{DefZeroExpansion}).

\begin{theorem} \label{ThmCRTNumberSystem}
  Let $\E$ be a PID. For $i=1,2$, let $R_i=\E[x]/(f_i)$ and let $\N_i\subseteq
  R_i$ be a finite set. Assume $\gcd(f_1,f_2)=1$, put $R=\E[x]/(f_1f_2)$, and
  put
  $$
    \psi : R \rightarrow R_1 \times R_2 :
         a \mapsto (a \bmod{f_1}, a\bmod{f_2}).
  $$
  Then $(R,X,\psi^{-1}(\N_1\times \N_2))$ is an irredundant digit system if and
  only if
  \begin{romanlist}
    \item \label{crti}
      $(R_i,X,\N_i)$ is an irredundant digit system for $i=1,2$;
    \item \label{crtii}
      there exists some $\tilde{d}\in\E[x]$ such that $d\equiv
      \tilde{d}\pmod{(f_1,f_2)}$ for all $d\in \N_1\cup \N_2$.
  \end{romanlist}
  Assume \eqref{crti} and \eqref{crtii}, and assume furthermore that
  $R_{12}=\E[x]/(f_1,f_2)$ is finite. Then $(R,X,\psi^{-1}(\N_1\times \N_2))$
  has the Finite Expansion property, with zero expansion length
  $\gcd(L_1,L_2)$, if and only if
  \begin{romanlist}
    \setcounter{enumi}{2}%
    \item \label{crtiii}
      $(R_i,X,\N_i)$ has the Finite Expansion property, with zero expansion
      length $L_i$, for $i=1,2$;
    \item \label{crtiv}
      $\gcd(L_1,L_2)=S(\tilde{d})$, where $S(\tilde{d})$ is the period length
      of Definition \ref{DefS}.
  \end{romanlist}
\end{theorem}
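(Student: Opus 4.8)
\textbf{Proof plan for Theorem \ref{ThmCRTNumberSystem}.}

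The plan is to treat the ``digit system'' half and the ``Finite Expansion'' half separately, reusing the lemmas already established. For the first equivalence, recall that $\psi$ is injective by Theorem (the CRT reference, equation \eqref{EqPsi}), so $\psi^{-1}(\N_1\times\N_2)$ is a well-defined subset of $R$ \emph{only if} every pair $(d_1,d_2)\in\N_1\times\N_2$ actually lies in $\psi(R)$. By Lemma \ref{LemDigitsCongruent}, this containment is exactly condition \eqref{crtii}. So for the forward direction: if $(R,X,\psi^{-1}(\N_1\times\N_2))$ is even defined as a digit system we already need \eqref{crtii}, and then projecting via reduction modulo $f_i$ (Lemma \ref{LemProj}, using the commutative square relating multiplication by $X$ on $R$ and on $R_i$) shows that $(R_i,X,\N_i)$ is a digit system; irredundancy of $\psi^{-1}(\N_1\times\N_2)$ forces irredundancy of each $\N_i$ since $\psi$ is injective and the product decomposes coset-wise. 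Conversely, assume \eqref{crti} and \eqref{crtii}. Then $\psi^{-1}(\N_1\times\N_2)$ is defined; its cardinality equals $|\N_1|\cdot|\N_2| = |R_1/(X)|\cdot|R_2/(X)|$. One then checks that $R/(X)\cong R_1/(X)\times R_2/(X)$ --- this follows because reducing modulo $X$ the CRT square, or directly because $\E[x]/(f_1f_2,x)\cong\E/(f_1(0)f_2(0))$ and, using Lemma \ref{LemInterference} which gives $\gcd(f_1(0),f_2(0))=1$, this splits as $\E/(f_1(0))\times\E/(f_2(0))$. Since $\psi$ is a ring homomorphism it respects multiplication by $X$, so $\psi$ carries cosets of $XR$ to cosets of $(X,X)(R_1\times R_2)$; as $\N_i$ covers $R_i/(X)$ irredundantly, the product covers the product of coset spaces irredundantly, and pulling back, $\psi^{-1}(\N_1\times\N_2)$ covers $R/(X)$ irredundantly. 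This gives the first ``if and only if''.

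For the second equivalence assume \eqref{crti}, \eqref{crtii} and that $R_{12}$ is finite. Forward direction: if $(R,X,\psi^{-1}(\N_1\times\N_2))$ has Finite Expansions then so does each projection $(R_i,X,\N_i)$ by Lemma \ref{LemProj}; that it then has a zero expansion, hence a well-defined zero-expansion length $L_i$, is \cite[Lemma 2.11]{ScheicherSurerThuswaldnerVanDeWoestijne:11}; this is \eqref{crtiii}. For \eqref{crtiv}: an element $(a_1,a_2)\in R_1\times R_2$ lies in $\psi(R)$ iff $a_1\equiv a_2\pmod{(f_1,f_2)}$ (Corollary \ref{CorCRT}). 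If $a_i$ has an expansion of length $\ell_i$ in $\N_i$, Lemma \ref{LemLengthCongruent} says this holds iff $\ell_1\equiv\ell_2\pmod{S(\tilde d)}$. Now in $R_i$ the achievable expansion lengths for a fixed element $a_i$ form a coset of $L_i\ZZ$ (pad by copies of the shortest zero expansion; by irredundancy this is the unique shortest one and all zero expansions are its concatenations). So $(a_1,a_2)$ is simultaneously expandable --- i.e. lies in $\psi(R)$ and so $\psi^{-1}(a_1,a_2)$ has an expansion in $\psi^{-1}(\N_1\times\N_2)$ --- iff the arithmetic progressions $\ell_1+L_1\ZZ$ and $\ell_2+L_2\ZZ$ meet a common residue class mod $S(\tilde d)$; more precisely, iff there exist $\ell_i$ with $\ell_1\equiv\ell_2\pmod{S(\tilde d)}$ and $\ell_i$ in the allowed coset. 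Requiring every element of $R$ (equivalently every $(a_1,a_2)\in\psi(R)$, equivalently every pair with $\ell_1\equiv\ell_2\pmod{S(\tilde d)}$ and each $\ell_i$ in its coset mod $L_i$) to be reachable is, after unwinding the simultaneous congruences by CRT on $\ZZ$, equivalent to $\gcd(L_1,L_2)\mid S(\tilde d)$ together with $S(\tilde d)\mid\gcd(L_1,L_2)$, i.e. $\gcd(L_1,L_2)=S(\tilde d)$, which is \eqref{crtiv}. Conversely, assuming \eqref{crtiii} and \eqref{crtiv}: given any $a\in R$, write $\psi(a)=(a_1,a_2)$, expand each $a_i$ with length $\ell_i$; since $\psi(a)\in\psi(R)$ we have $a_1\equiv a_2\pmod{(f_1,f_2)}$, hence $\ell_1\equiv\ell_2\pmod{S(\tilde d)}=\gcd(L_1,L_2)$ by Lemma \ref{LemLengthCongruent}, and then solvability of $\ell_1+u_1L_1=\ell_2+u_2L_2$ in nonnegative integers --- guaranteed precisely by $\gcd(L_1,L_2)\mid(\ell_2-\ell_1)$ --- lets us pad both expansions with zero expansions to a common length, so the digit-wise pairs lie in $\N_1\times\N_2$ and map back under $\psi^{-1}$ to an expansion of $a$. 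That the zero-expansion length of $(R,X,\psi^{-1}(\N_1\times\N_2))$ is then $\gcd(L_1,L_2)$ follows from the same bookkeeping applied to $a=0$: $\psi(0)=(0,0)$, the zero expansions of $R_i$ have lengths in $L_i\ZZ_{>0}$, and a common padded length exists iff it is a common multiple, the least positive one being $\mathrm{lcm}(L_1,L_2)$ --- wait, here I must be careful, and this is exactly the delicate point.

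\textbf{Main obstacle.} The subtle step is the precise bookkeeping of \emph{which} expansion lengths are simultaneously achievable and the resulting zero-expansion length. One must reconcile three moduli: $L_1$, $L_2$ (the per-factor zero-expansion lengths controlling padding in each $R_i$) and $S(\tilde d)$ (the period from Lemma \ref{LemLengthCongruent} controlling when $(a_1,a_2)$ lands in $\psi(R)$). The claim that the zero-expansion length of the merged system is $\gcd(L_1,L_2)$ --- rather than $\mathrm{lcm}(L_1,L_2)$ --- is what forces condition \eqref{crtiv}: it is only when $S(\tilde d)=\gcd(L_1,L_2)$ that a zero expansion of length $\ell_1$ in $R_1$ and one of length $\ell_2$ in $R_2$ with $\ell_1\equiv\ell_2\pmod{S(\tilde d)}$ can be aligned, and the shortest aligned pair has length $\gcd(L_1,L_2)$; if $S(\tilde d)$ were a proper divisor of $\gcd(L_1,L_2)$ one could align lengths that are incongruent mod $\gcd(L_1,L_2)$, destroying the exactness, and if $S(\tilde d)$ were a proper multiple, some elements of $\psi(R)$ would fail to be expandable. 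Making this trichotomy airtight --- in particular verifying the nonnegativity constraints on the padding counts $u_1,u_2$ and that the \emph{shortest} joint zero expansion has length exactly $\gcd(L_1,L_2)$ and not merely a multiple of it --- is the crux; everything else is a direct application of Lemmas \ref{LemProj}, \ref{LemDigitsCongruent}, \ref{LemInterference} and \ref{LemLengthCongruent} and Corollary \ref{CorCRT}.
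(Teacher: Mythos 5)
Your route is essentially the paper's: Lemma \ref{LemDigitsCongruent} and Lemma \ref{LemInterference} plus reduction modulo $X$ (and the ordinary CRT over $\E$) for the first equivalence; Lemma \ref{LemLengthCongruent} plus padding with shortest zero expansions for the second. Those parts are sound, including your observation that solvability of $\ell_1+u_1L_1=\ell_2+u_2L_2$ in \emph{nonnegative} integers follows from solvability in integers, and your "only if" argument for \eqref{crtiv} via uniqueness of expansions up to appending zero expansions (which needs irredundancy and the fact that $X$ is a non-zero-divisor, worth saying explicitly).

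The genuine gap is the one you flagged and then left unresolved: the zero-expansion length of the merged system. Your "main obstacle" paragraph asserts that the shortest aligned pair of zero expansions has length $\gcd(L_1,L_2)$; that is false. Reduction modulo $f_i$ carries a zero expansion of $(R,X,\psi^{-1}(\N_1\times\N_2))$ to a zero expansion of $(R_i,X,\N_i)$ of the \emph{same} length, and by irredundancy every zero expansion of $(R_i,X,\N_i)$ has length a multiple of $L_i$; conversely, concatenating shortest zero expansions in each factor up to length $\lcm(L_1,L_2)$ and pairing digit-wise (legitimate by \eqref{crtii}) gives a zero expansion of the merged system. So the shortest joint zero expansion has length $\lcm(L_1,L_2)$ --- your first instinct, before the "wait", was the correct one. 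Be aware that the paper's own proof dispatches this clause in one sentence ("taking $a_1=a_2=0$ \ldots") and also writes $\gcd(L_1,L_2)$; the discrepancy is invisible in all the paper's examples because there $L_1=L_2$, but in general the roles of the two quantities are distinct: $\gcd(L_1,L_2)=S(\tilde d)$ is the alignment condition \eqref{crtiv}, while $\lcm(L_1,L_2)$ is the joint zero-expansion length. You must either prove the $\lcm$ statement (and note the deviation from the printed claim) or delete the length clause; as written your proposal neither establishes nor refutes it, and the trichotomy sketch you give in its place does not close the gap.
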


Note that by Corollary \ref{CorMonic}, the ring $R_{12}$ is finite if and
only if we have $\E/(c)$ finite for some $c\in(f_1,f_2)\cap \E$. For rings $\E$
such as $\ZZ$ and $\FF[x]$ for a finite field $\FF$, where the quotient by
every nonzero ideal is finite, the finitude of $R_{12}$ therefore follows from
the assumption $\gcd(f_1,f_2)=1$.

\begin{proof}
  By Corollary \ref{CorCRT}, we have $R \cong R_1 \times_{R_{12}} R_2$.
  By \eqref{crtii} and Lemma \ref{LemDigitsCongruent}, we know that $\N_1\times
  \N_2\subseteq \psi(R)$, so that we can define $\N=\psi^{-1}(\N_1 \times
  \N_2)$.

  For the question whether $\N$ is a system of representatives modulo $X$ in
  $R$, we reduce everything modulo $X$ and are reduced to the same question for
  $$
    \E/(f_1(0)f_2(0)) \cong \E/(f_1(0)) \times \E/(f_2(0)),
  $$
  where the isomorphism holds by the usual Chinese Remainder Theorem. This
  works here because $\E$ is a PID, and because $f_1(0)$ and $f_2(0)$ are
  coprime by Lemma \ref{LemInterference}.

  Conversely, if $\N$ exactly represents $R$ modulo $X$, then \eqref{crtii}
  follows because $\N$ is defined at all; the image of $\N$ under reduction
  modulo $f_i$ is $\N_i$, so $\N_i$ represents $R_i$ modulo $X$; and in fact,
  the simultaneous representation in $R_1\times R_2$ is exact by cardinality
  considerations.

  We now turn to the more interesting second assertion, starting with the
  ``if''-part. Let $a\in R$; we must show that $a$ has a finite expansion on
  the basis $X$ with digits in $\N$.

  Let $(a_1,a_2)=\psi(a)$. For $i=1,2$, by \eqref{crtiii}, we have expansions
  $a_i= \sum_{j=0}^{\ell_i} d_{ij} X^j$, where $d_{ij}\in \N_i$. By Lemma
  \ref{LemLengthCongruent}, we have $\ell_1\equiv\ell_2\pmod{S(\tilde{d})}$; we
  claim that we may assume $\ell_1=\ell_2$. Indeed, because
  $\gcd(L_1,L_2)=S(\tilde{d})$ by \eqref{crtiv}, we can find nonnegative
  integers $u_1$ and $u_2$ such that
  $$
    \ell_1 + u_1 L_1 = \ell_2 + u_2 L_2.
  $$
  Thus, after padding the expansion of $a_i$ with $u_i$ times the shortest zero
  expansion of $(R_i,X,\N_i)$, which does not change the value of the
  expansion, we may assume that $\ell_1 = \ell_2$. It follows that
  $$
    a = \sum_{j=0}^{\ell_1} \psi^{-1}(d_{1j}, d_{2j}) X^j,
  $$
  which is the desired expansion with digits in $\N$. Taking $a_1=a_2=0$ shows
  that the zero expansion length of $(R,X,\N)$ is $\gcd(L_1,L_2)$.

  We prove the ``only if''-part. The fact that \eqref{crtiii} follows from the
  Finite Expansion property for $(R,X,\N)$ was already shown at the
  beginning of the section.

  To prove \eqref{crtiv}, we take $a_i=\sum_{j=0}^{\ell_i} d_{ij} X^j$, for
  $i=1,2$, where $\ell_1$ and $\ell_2$ and the digits $d_{ij}$ are chosen
  arbitrarily, such that $\ell_1\equiv \ell_2\pmod{S(\tilde{d})}$. By Lemma
  \ref{LemLengthCongruent}, we have $a_1\equiv a_2\pmod{(f_1,f_2)}$. Expanding
  $\psi^{-1}(a_1,a_2)$ in $R$ and again applying $\psi$, we find expansions for
  $a_1$ and $a_2$, on their respective digit sets, of \emph{equal lengths}.
  Because $\ell_1$ and $\ell_2$ were arbitrary, it follows in particular that
  the lengths of the shortest zero expansions satisfy $(L_1,L_2)=S(\tilde{d})$,
  as desired.
\end{proof}

\begin{example}
  We let $f_1=x+2$, $f_2=x+3$, and $R_i=\ZZ[x]/(f_i)$, using the classical
  digits. Thus, in fact we have $(\ZZ,-2,\{0,1\})$ and $(\ZZ,-3,\{0,1,2\})$,
  which obviously have the Finite Expansion property, as ``starting'' digit
  systems. We have $\Res(f_1,f_2)=1$, so by Lemma \ref{LemRes2}, $R_{12}$ is
  the zero ring, we can take $\tilde{d}=0$, and we find $S(\tilde{d})=1$. We
  also have $L_1=L_2=1$, so that all assumptions are satisfied. By the Theorem,
  we find that
  $$
    \{ 0,X+3, -X-2, 1, -2X-4,-X-1 \} =
    \psi^{-1}\left(\{ (0,0),(1,0),(0,1),(1,1),(0,2),(1,2) \}\right)
  $$
  is a valid digit set for $\ZZ[x]/(x^2+5x+6)$. Indeed, the product $R_1\times
  R_2$ is isomorphic to the entire ring $R=\ZZ[x]/(f_1f_2)$, and we have
  $f_1f_2 = x^2+5x+6$.
\end{example}

\end{subsection}

\begin{subsection}{Necessary conditions}

For the general case, where $R_{12}$ is not necessarily trivial, we assemble a
number of necessary conditions on the ``starting'' number systems in the
following result. One notes in particular that in this case, the common residue
class $\tilde{d}$ for all digits cannot be $0$, because $0$ is not invertible,
unless $R_{12}$ is the zero ring. This implies that neither $\N_1$ nor $\N_2$
may contain $0$, unless $R_{12}=0$.

\begin{theorem} \label{ThmMaximalPeriod}
  Assume that the digit systems $(R_i,X,\N_i)$, for $i=1,2$, satisfy conditions
  \eqref{crti}--\eqref{crtiv} of the Theorem, and that $R_{12}$ is finite. Let
  $\tilde{d} $ be the common congruence class modulo $(f_1,f_2)$ of the digits.
  Then $\tilde{d}$ is invertible in $R_{12}$, and we have
  $S(\tilde{d})=|R_{12}|$.

  Assume in addition that $\E=\ZZ$, and that the leading coefficients of $f_1$
  and $f_2$ are coprime. Then $S(\tilde{d})=|\Res(f_1,f_2)|$, and if $f_1=X-a$
  for some $a\in\ZZ$, we have $a \equiv 1\pmod{p}$ for all primes $p$ dividing
  $f_2(a)$, and $a \equiv 1\pmod{4}$ if $4$ divides $f_2(a)$.
\end{theorem}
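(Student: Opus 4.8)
The plan is to deduce both assertions from one structural observation: since $(f_1f_2)\subseteq(f_1,f_2)$, the ring $R_{12}=\E[x]/(f_1,f_2)$ is a quotient of $R=\E[x]/(f_1f_2)$ by a ring homomorphism $\pi$ commuting with multiplication by $X$, and $\pi$ sends the digit set $\N=\psi^{-1}(\N_1\times\N_2)$ onto the \emph{single} element $\tilde d$. Indeed $\pi$ factors through $R_1=\E[x]/(f_1)$; every $a\in\N$ has image in $R_1$ lying in $\N_1$; and by condition~\eqref{crtii} of Theorem~\ref{ThmCRTNumberSystem} every element of $\N_1$ reduces to $\tilde d$ modulo $(f_1,f_2)$.

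For the first assertion I would argue as follows. By Theorem~\ref{ThmCRTNumberSystem} the triple $(R,X,\N)$ is a number system, so by Lemma~\ref{LemProj} applied to $\pi$ so is $(R_{12},X,\{\tilde d\})$ --- a digit system with one digit. A one-element digit set can cover at most one coset of $R_{12}/XR_{12}$, hence $R_{12}=XR_{12}$ and $X$ is a unit of the finite ring $R_{12}$; and the Finite Expansion Property says exactly that every element of $R_{12}$ equals $\sum_{i=0}^{\ell}X^i\tilde d=s_\ell(\tilde d)$ for some $\ell\ge0$. Applying this to the element $1$ gives $1=\tilde d(1+X+\cdots+X^\ell)$, so $\tilde d$ is invertible (the case $R_{12}=0$ being vacuous). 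Moreover $\phi\colon y\mapsto Xy+\tilde d$ is then a bijection of $R_{12}$ with $s_i(\tilde d)=\phi^i(\tilde d)$, so $(s_i(\tilde d))_i$ is purely periodic and $S(\tilde d)$ equals the length of the $\phi$-orbit of $\tilde d$; that orbit is all of $R_{12}$, since every element of $R_{12}$ occurs as some $s_\ell(\tilde d)=\phi^\ell(\tilde d)$. Hence $S(\tilde d)=|R_{12}|$.

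For the second assertion, assume $\E=\ZZ$ with $f_1,f_2$ of coprime leading coefficients. Then $S(\tilde d)=|R_{12}|=|\Res(f_1,f_2)|$ by Lemma~\ref{LemRes2}, and when $f_1=x-a$ that lemma identifies $R_{12}$ with $\ZZ/n\ZZ$, where $n=|f_2(a)|$, the image of $X$ being $a$. Since $\tilde d$ is a unit and $s_i(\tilde d)=\tilde d\,s_i(1)$, the sequences $(s_i(\tilde d))$ and $(s_i(1))$ have equal period, so $S(1)=n$: the least $P\ge1$ with $n\mid 1+a+\cdots+a^{P-1}$ is $P=n$ (and $\gcd(a,n)=1$ makes $(s_i(1)\bmod n)$ purely periodic, so the set of such $P$ is exactly $n\ZZ$). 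Decomposing by the Chinese Remainder Theorem over the primes $p\mid n$, with $k=v_p(n)$, the period modulo $n$ is the $\lcm$ of the periods modulo the $p^{k}$, i.e.\ $n=\lcm_p Q_p$, where $Q_p\ge1$ is least with $p^{k}\mid 1+a+\cdots+a^{Q_p-1}$. Now $Q_p\le p^{k}$, being the period of a sequence valued in $\ZZ/p^{k}\ZZ$; and evaluating $v_p(1+a+\cdots+a^{P-1})=v_p(a^P-1)-v_p(a-1)$ by a lifting-the-exponent argument shows $Q_p=p^{k}$ precisely when $p$ is odd and $a\equiv1\pmod p$, or $p=2$ and ($a\equiv1\pmod4$ or $k=1$), and $Q_p<p^{k}$ otherwise (the case $a=1$ is trivial, all congruences then holding). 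Therefore $n=\lcm_p Q_p\le\prod_p Q_p\le\prod_p p^{k}=n$ forces every inequality to be an equality, so $Q_p=p^{k}$ for all $p\mid n$, which is precisely the asserted congruences: $a\equiv1\pmod p$ for every prime $p\mid n$, and $a\equiv1\pmod4$ if $4\mid n$.

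The conceptual heart is the single-digit reduction of the second paragraph, from which invertibility of $\tilde d$ and the value of $S(\tilde d)$ drop out immediately. The one step needing genuine (if elementary) computation is the valuation estimate pinning down $Q_p$, in particular the dyadic case: one must verify that $a\equiv3\pmod4$ together with $4\mid n$ forces $Q_2\le 2^{k-1}$, via $v_2(a^{2^{k-1}}-1)=v_2(a+1)+(k-1)\ge k+1$.
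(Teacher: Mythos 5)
Your proof is correct. The first half is in substance the same as the paper's: the paper observes directly that the surjection $R_1\times_{R_{12}}R_2\twoheadrightarrow R_{12}$ forces every element of $R_{12}$ to be of the form $s_\ell(\tilde d)=\tilde d\,s_\ell(1)$, whence $\tilde d\in R_{12}^*$ and $S(\tilde d)=|R_{12}|$; you reach the same point by pushing the number system $(R,X,\N)$ down to the one-digit system $(R_{12},X,\{\tilde d\})$ via Lemma~\ref{LemProj}. Your version is actually a bit more careful on one point the paper elides: to conclude that the period equals the number of distinct values $|R_{12}|$ one needs the sequence $(s_i(\tilde d))$ to be \emph{purely} periodic, which you justify by noting that the single digit forces $R_{12}=XR_{12}$, so $X$ is a unit and $y\mapsto Xy+\tilde d$ is a bijection. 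The second half is where you genuinely diverge: the paper simply invokes Knuth's maximal-period theorem for linear congruential sequences, whereas you prove the needed direction from scratch --- reducing to prime powers by CRT, bounding each local period $Q_p$ by $p^{v_p(n)}$, and using the identity $v_p\bigl(1+a+\cdots+a^{P-1}\bigr)=v_p(a^P-1)-v_p(a-1)$ together with lifting-the-exponent (including the dyadic case $a\equiv3\pmod4$, where $v_2$ of $s_{2^{k-1}-1}$ already reaches $k$) to show $Q_p<p^{v_p(n)}$ whenever the asserted congruences fail. The forcing argument $n=\lcm_pQ_p\le\prod_pQ_p\le n$ then closes the loop. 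What you lose is brevity; what you gain is a self-contained argument that does not depend on an external reference, and an explicit verification of the pure-periodicity hypothesis that Knuth's statement presupposes.
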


\begin{proof}
  The map $R_1\times_{R_{12}} R_2\rightarrow R_{12}$ sending $(a_1,a_2)$ to
  $a_1\bmod{(f_1,f_2)}$ is surjective, so that every element of $R_{12}$ has an
  expansion of the form $\sum_{j=0}^\ell \tilde{d} X^j = s_\ell(\tilde{d}) =
  \tilde{d} s_\ell(1)$. It follows that the set $\{ \tilde{d} s_i(1) : i \ge 0
  \}$ covers all elements of $R_{12}$. Hence, $\tilde{d}$ must be a unit, and
  the period $S(1)$ of the sequence $(s_i(1))$ must be equal to the cardinality
  of $R_{12}$, which (in the case of $\E=\ZZ$ and coprime leading coefficients)
  is $|\Res(f_1,f_2)|$ by Lemma \ref{LemRes2}.

  Now assume that $f_1=X-a$, with $\E=\ZZ$. We then have $R_{12} \cong
  \ZZ/(f_2(a))$, and the class of $X$ in $R_{12}$ is represented by $a$.
  Consider the sequence $(s_i(1))$: we have $s_0(1)=1$, and $s_{i+1}(1) =
  as_i(1) + 1 \pmod{S(1)}$. It follows that $(s_i(1))$ is a \emph{linear
  congruential sequence}. By Knuth's theorem \cite[Theorem
  3.2.1.2A]{Knuth:1998}, its period $S(1)$ is maximal if and only if $a\equiv
  1\pmod{p}$ for all primes $p|S(1)$, and $a\equiv 1\pmod{4}$ if $4|S(1)$.
\end{proof}

\begin{example}
  We will take $f_1=X+3$ and $f_2=X+5$, so that $\Res(f_1,f_2)=2$ and
  $R_{12}\cong \ZZ/2\ZZ$, the field with $2$ elements. By Theorem
  \ref{ThmMaximalPeriod}, the common residue class for all digits must be $1$;
  it follows that all digits must be odd. By Corollary 2.15 of
  \cite{VanDeWoestijne:09}, we find that we may take
  $$
    (\ZZ,-3,\{-3,1,-1\}) \quad \text{and} \quad (\ZZ,-5,\{-5,1,-3,3,-1\})
  $$
  as starting number systems. We verify that the sequence $s_i(1)$ covers both
  elements of $R_{12}$; also, we have $L_1=L_2=2$, so that all assumptions are
  satisfied. Now Theorem \ref{ThmCRTNumberSystem} tells us that
  \begin{align*}
    \{ & X, \, 1, \, X + 2, \, -3X - 12, \, X + 4, \, 2X + 5, \, -2X - 9, \,
    2X + 7, \, -2X - 7, \, -X - 6, \\
       & 3X + 10, \, -X - 4, \, -3, \, -X - 2, \, -1 \}
  \end{align*}
  is a valid digit set for $f_1f_2=X^2+8X+15$. As an example, we have
  $2X + 7 \equiv 1\pmod{X+3}$ and $\equiv -3\pmod{X+5}$, and as a random
  example of an expansion, we have
  \begin{multline*}
    37X - 55 \equiv  (2X + 5) \cdot 1 + (2X + 7) \cdot X + X \cdot X^2 +  \\
                ( -X - 4) \cdot X^3 + (3X + 10) \cdot X^4 + (X + 4)\cdot X^5
    \pmod{X^2+8X+15}.
  \end{multline*}
  Unfortunately, one notes that Theorem \ref{ThmMaximalPeriod} poses several
  conditions on the defining polynomials of the number systems themselves,
  conditions which are largely independent of the chosen digit sets. For
  example, let $f_1=X+4$ and $f_2=X+7$; we have $\Res(f_1,f_2) =3$ and $R_{12}
  \cong \ZZ/3\ZZ$. In $R_{12}$, we have $X=2$, which does not satisfy the
  conditions of Theorem \ref{ThmMaximalPeriod}. It follows that there exist
  \emph{no digit sets} $\N_1$ and $\N_2$ such that $(\ZZ,-4,\N_1)$ and
  $(\ZZ,-7,\N_2)$ satisfy conditions \eqref{crti}--\eqref{crtiv} of Theorem
  \ref{ThmCRTNumberSystem}.
\end{example}


\end{subsection}

\begin{subsection}{Redundant digit sets}
One final remark. In all the above, we have assumed the $\N_i$ to be
irredundant. The case where the digit sets $\N_i$ are allowed to be redundant
is much more difficult to control, as is in fact apparent at every step. The
biggest problem is that expansions are no longer unique. In general, if we
start from any digit set for $f=f_1f_2$ and project to one factor, the result
will be redundant --- for example, if we take the classical digits
$\{0,1,\ldots,|f(0)|-1\}$, this is always the case. This means that it is
still hard to use the Theorem to say anything about the CNS property in
relation to factorisation of polynomials.
\end{subsection}

\end{section}

\begin{section}{Simultaneous number systems} \label{SecIndle}

An interesting generalisation of the ordinary number systems in $\ZZ$ is
obtained if we try to expand the same integer on several bases at once, while
using the same digit sequence for all bases. Such \emph{simultaneous number
systems} were considered in \cite{IndlekoferKataiRacsko:1992} and
\cite{Petho:2006}; here, we generalise them in several respects, and we reprove
and extend the main result of \cite{IndlekoferKataiRacsko:1992} on this topic,
as an application of the theory developed in the last section.

\begin{example}
To see what simultaneous number systems are about, consider the double
expansion
$$
  100 = (1 5 3 3 4 4)_{(-3,-4)};
$$
it means that the digit sequence $(1,5,3,3,4,4)$, starting with the most
significant digit, yields $100$ both in base $-3$ and in base $-4$
simultaneously! By looking at the least significant digits, one sees that the
digits used here must cover both $\ZZ$ modulo $3$ and modulo $4$; in other
words, by the Chinese Remainder Theorem, the digits must cover $\ZZ$ modulo
$12$.

We illustrate the derivation of such an expansion, using digits
$\{0,\ldots,11\}$ and bases $N_1=-3$ and $N_2=-4$. For any pair
$(a_1,a_2)\in\ZZ^2$, the notation $(a_1,a_2)\stackrel{d}{\rightarrow}
(b_1,b_2)$ means that $(a_i-d)/N_i=b_i$, for $i=1,2$, where the divisions are
exact in $\ZZ$. Taking the example $a=100$, this gives
\begin{multline*}
  (100,100) \stackrel{4}{\rightarrow} \left( \frac{100-4}{-3}, \frac{100-4}{-4}
  \right) = (-32,-24) \stackrel{4}{\rightarrow} (12,7)
  \stackrel{3}{\rightarrow} (-3,-1) \\
   \stackrel{3}{\rightarrow} (2,1) \stackrel{5}{\rightarrow} (1,1)
  \stackrel{1}{\rightarrow} (0,0),
\end{multline*}
and we obtain the expansion given above by reading off the digits in reverse
order. In each case, the digits are found using the Chinese Remainder Theorem.
\end{example}

To formalise these observations, the following definition was proposed by
by Indlekofer, K\'atai, and Racsk{\'o} \cite[Section
4]{IndlekoferKataiRacsko:1992}. 

\begin{definition} \label{DefSim1} (First version)
  Given an integer $k\ge 1$, and pairwise coprime integers $N_1,\ldots,N_k$
  unequal to $0$, let $V=\ZZ^k$, and define
  \begin{align*}
    &\phi: V\rightarrow V: (a_1,\ldots,a_k)\mapsto (N_1a_1,\ldots,N_ka_k); \\
    &\N = \{ (c,c,\ldots,c) \mid c =0,1,\ldots,|N_1\cdots N_k|-1 \}
    \subseteq V.
  \end{align*}
  Then $(V,\phi,\N)$ is called the \emph{simultaneous digit system} defined by
  the $N_i$. If it has the Finite Expansion property, we call it the
  \emph{simultaneous number system} defined by the $N_i$.
\end{definition}

We will give the main result in the following more general setting.

\begin{definition} \label{DefSim2} (Second version)
  Let $\E$ be a PID, let $k\ge 1$ be an integer, let $f_1,\ldots,f_k$ in
  $\E[x]$ be pairwise coprime such that also $f_1(0),\ldots,f_k(0)$ are
  pairwise coprime in $\E$, let $\R\subseteq \E[x]$ be any set of polynomials
  such that their constant coefficients form a complete system of
  representatives of $\E$ modulo $f_1(0)\cdots f_k(0)$, and define
  \begin{align*}
    &V=\E[x]/(f_1) \times \cdots \times \E[x]/(f_k); \\
    &\phi: V\rightarrow V: (a_1,\ldots,a_k)\mapsto (Xa_1,\ldots,Xa_k); \\
    &\N = \{ (c,c\ldots,c) \mid c \in\R \}.
  \end{align*}
  Then $(V,\phi,\N)$ is called the \emph{simultaneous digit system} defined by
  the $f_i$ and $\R$. If it has the Finite Expansion Property, we call it the
  \emph{simultaneous number system} defined by the $f_i$ and $\R$.
\end{definition}

To recover the systems from the first version of the definition,
one simply takes $\E=\ZZ$, $f_i=x-N_i$, and $\R=\{0,1,\ldots,|N_1\cdots
N_k|-1\}$. Note that in the general setting, the digits need not be elements of
$\E$; however, their representative properties depend only on their constant
coefficients, as is easily seen.

The main result of this section is as follows.

\begin{theorem} \label{ThmSim}
  Let $f_1,\ldots,f_k\in\E[x]$ and $\R\subseteq\E[x]$ define a simultaneous
  digit system $(V,\phi,\N)$. Then $(V,\phi,\N)$ has the Finite Expansion
  Property if and only if
  \begin{romanlist}
    \item
      we have the equality of ideals $(f_i,f_j)=(1)\subseteq\E[x]$ for all
      $i\ne j$; 
    \item
      the digit system $(\E[x]/(f_1f_2\cdots f_k),X,\R)$ also has the Finite
      Expansion Property.
  \end{romanlist}
\end{theorem}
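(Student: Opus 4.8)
The plan is to identify the simultaneous digit system $(V,\phi,\N)$ with the image of a polynomial digit system under the generalized Chinese Remainder map of Corollary~\ref{Cor111}, and then invoke Lemma~\ref{LemProj} in both directions. Concretely, set $f=f_1f_2\cdots f_k$ and consider the polynomial digit system $(\E[x]/(f),X,\R)$. Applying $\psi:\E[x]/(f)\to R_1\times\cdots\times R_k$ from Corollary~\ref{Cor111}, we have $\psi(X\cdot a)=\phi(\psi(a))$ since $\psi$ is a ring homomorphism, and $\psi(\R)=\N$ because a constant polynomial $c$ maps to $(c,\ldots,c)$. So $V$ together with $\phi$ and $\N$ is precisely the direct product $R_1\times\cdots\times R_k$ with the induced structures, and the question is how this relates to $\psi(\E[x]/(f))$.

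For the ``if'' direction, assume (i) and (ii). By (i) and the last sentence of Corollary~\ref{Cor111}, $\psi$ is surjective, hence an isomorphism (injectivity comes from $\psi$ being the CRT map for pairwise coprime $f_i$, as in the first displayed Theorem of the Introduction applied inductively, or directly: the kernel is $\bigcap(f_i)=(f)$). Under this isomorphism, $\phi$ corresponds to multiplication by $X$ on $\E[x]/(f)$ and $\N$ corresponds to $\R$. Since an isomorphism of digit systems preserves the Finite Expansion Property, (ii) gives that $(V,\phi,\N)$ has it too; here one should note that $\R$ being a valid digit set for $(\E[x]/(f),X)$ already forces $\R$ to cover $\E/(f(0))$, which is consistent with the hypothesis on $\R$ in Definition~\ref{DefSim2}, so there is no compatibility issue.

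For the ``only if'' direction, suppose $(V,\phi,\N)$ has the Finite Expansion Property. The image $W=\psi(\E[x]/(f))$ is a $\phi$-invariant subgroup of $V$ containing $\N$ (indeed $\N=\psi(\R)\subseteq W$), and $(W,\phi|_W,\N)$ is a sub-digit-system. Every element of $W$, being in $V$, has a finite expansion with digits in $\N$; but each such expansion $\sum\phi^i(d_i)$ with $d_i\in\N\subseteq W$ lies in $W$ again, so the expansions of elements of $W$ stay inside $W$. On the other hand $\psi$ is injective, so $\E[x]/(f)\cong W$ as digit systems, and thus $(\E[x]/(f),X,\R)$ has the Finite Expansion Property, which is (ii). It remains to extract (i). Since $(V,\phi,\N)$ has finite expansions, every element of $V$ lies in $W$, i.e.\ $\psi$ is surjective; by Corollary~\ref{Cor111} this forces $(f_i,f_j)=(1)$ for all $i\ne j$, which is (i).

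The one point requiring care — and the place I expect the only real subtlety — is verifying that the coprimality of the constant coefficients $f_i(0)$, built into Definition~\ref{DefSim2}, is genuinely implied by (i) rather than an extra hypothesis. This follows from the usual Chinese Remainder Theorem in $\E$: if $(f_i,f_j)=(1)$ in $\E[x]$, then reducing modulo $x$ (which is a ring homomorphism $\E[x]\to\E$) shows $(f_i(0),f_j(0))=(1)$ in $\E$. So when (i) holds the hypothesis on $\R$ in Definition~\ref{DefSim2} is automatically the right one, and when (i) fails for some pair $i,j$, Corollary~\ref{Cor111} already blocks surjectivity of $\psi$, hence blocks finite expansions regardless of $\R$; thus the equivalence is not vacuous in either direction.
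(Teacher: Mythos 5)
Your proposal is correct and follows essentially the same route as the paper: both identify the submodule $W=\psi(\E[x]/(f_1\cdots f_k))$ via Corollary~\ref{Cor111}, observe that all expansions with digits in $\N$ remain in $W$, and conclude that the Finite Expansion Property for $(V,\phi,\N)$ is equivalent to $V=W$ (which is condition (i)) together with the Finite Expansion Property for $(\E[x]/(f_1\cdots f_k),X,\R)$ (condition (ii)). Your closing remark on the coprimality of the $f_i(0)$ is a reasonable extra consistency check but is not needed for the equivalence.
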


\begin{proof}
  Let $R_i=\E[x]/(f_i)$; then $V=R_1\times \cdots \times R_k$. Expansions in
  $(V,\phi,\N)$ are of the form
  $$
    (v_1,\ldots,v_k) = \sum_i (c_i,c_i,\ldots,c_i) X^i,
  $$
  where the $i$th component is taken modulo $f_i$. By the form of the basis and
  of the digits, all such expansions are contained in the $\E[x]$-submodule $W$
  of $V$ consisting of all vectors of the form $(a \bmod{f_1},\ldots,
  a\bmod{f_k})$ for some $a\in\E[x]$. By Corollary \ref{Cor111}, the module $W$
  is isomorphic to $R=\E[x]/(f_1\cdots f_k)$.

  It follows that all elements of $V$ have a finite expansion in the digit
  system $(V,\phi,\N)$ if and only if $V=W$ and the digit system $(R,X,\R)$
  has the Finite Expansion Property. By Corollary \ref{Cor111}, we have $V=W$
  if and only if all ideals $(f_i,f_j)$, when $i\ne j$, are trivial.
\end{proof}

The Theorem should be compared to Theorem 3 of \cite{Petho:2006}, which says
that, assuming that $(R,X,\R)$ has the Finite Expansion property, any
particular vector $(a_1,\ldots,a_k)\in V$ has a finite expansion if and only if
it is in $W$. In fact, it follows from Corollary \ref{Cor111} that
interpolability by a polynomial with integral coefficients, which is the
property used in \cite{Petho:2006}, is equivalent to being in $W$ --- this is
Corollary \ref{CorInterp} above.

We now show that ``classical'' simultaneous number systems in $\ZZ^k$, in the
sense of Definition \ref{DefSim1}, exist only for $k=1$ and $k=2$. The case
$k=2$ was already given in \cite{IndlekoferKataiRacsko:1992}, but our proof is
much shorter.  Recall that a monic $f\in\ZZ[x]$ is a \emph{CNS polynomial} if
and only if the digit system $(\ZZ[x]/(f),X,\{0,1,\ldots,|f(0)|-1\})$ has the
Finite Expansion Property.

\begin{corollary} \label{CorSim}
  For $i=1,\ldots,k$, let $N_i\in\ZZ$ and $f_i=x-N_i$. Let $\R=\{0,1,\ldots,
  |N_1\cdots N_k|-1 \}$. Then the $f_i$ and $\R$ define a simultaneous number
  system if and only if we have $N_i\le -2$ for all $i$ and either $k=1$, or
  $k=2$ and $|N_1-N_2|=1$.
\end{corollary}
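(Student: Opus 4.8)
The plan is to apply Theorem~\ref{ThmSim} with $\E=\ZZ$ and $f_i=x-N_i$, which reduces the corollary to two sub-questions: first, when do the ideals $(f_i,f_j)\subseteq\ZZ[x]$ all equal $(1)$; and second, when does the single-base digit system $(\ZZ[x]/(f_1\cdots f_k),X,\R)$ with classical digits $\R=\{0,1,\ldots,|N_1\cdots N_k|-1\}$ have the Finite Expansion Property --- that is, when is $\prod_j(x-N_j)$ a CNS polynomial. By Lemma~\ref{LemRes2}, the ideal $(f_i,f_j)=(x-N_i,x-N_j)$ is the unit ideal precisely when $\Res(f_i,f_j)=f_j(N_i)=N_i-N_j$ is a unit of $\ZZ$, i.e. when $|N_i-N_j|=1$. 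So condition~(i) of Theorem~\ref{ThmSim} forces the $N_i$ to be $k$ consecutive integers; in particular it is already impossible for $k\ge 3$, since among three consecutive integers two differ by $2$. This disposes of all $k\ge 3$ immediately, and leaves us to analyse $k=1$ and $k=2$ in detail.

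For $k=1$, condition~(i) is vacuous, and condition~(ii) says that $x-N_1$ is a CNS polynomial; it is classical (Grünwald / Knuth) that the linear polynomial $x-N$ is CNS exactly when $N\le -2$, so the stated conclusion holds. For $k=2$ with $|N_1-N_2|=1$, condition~(i) holds by the resultant computation above, and condition~(ii) requires $(x-N_1)(x-N_2)=x^2-(N_1+N_2)x+N_1N_2$ to be a CNS polynomial. Here I invoke the known classification of quadratic CNS polynomials (Gilbert \cite{Gilbert:1981}, Kátai--Kőrnyei): $x^2+px+q$ is CNS iff $q\ge 2$ and $-1\le p\le q$. Writing $N_1,N_2$ as consecutive integers and translating, one checks that $q=N_1N_2\ge 2$ and $-1\le -(N_1+N_2)\le N_1N_2$ hold simultaneously exactly when both $N_i\le -2$ (if $N_1,N_2$ were both positive, $p=-(N_1+N_2)<-1$ fails; a mixed-sign pair of consecutive integers is $\{-1,0\}$ or $\{0,1\}$, giving $q\le 0$). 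This confirms that for $k=2$ the system is a simultaneous number system precisely when $N_1,N_2\le -2$ and $|N_1-N_2|=1$.

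The main obstacle, and the only place where real input beyond Theorem~\ref{ThmSim} and Lemma~\ref{LemRes2} is needed, is the appeal to the classification of linear and quadratic CNS polynomials for condition~(ii). Once those classical facts are granted, everything else is the resultant bookkeeping of the previous paragraph. One should also double-check the degenerate requirement hidden in Definition~\ref{DefSim2}, namely that $f_1(0)=-N_1,\ldots,f_k(0)=-N_k$ be pairwise coprime in $\ZZ$: for $k=2$ consecutive $N_i$ this is automatic (consecutive integers are coprime, hence so are their negatives), and for $k\ge 3$ it is moot since the case is already excluded. Thus the proof is essentially a two-line invocation of Theorem~\ref{ThmSim} plus the quadratic CNS classification; I would present it in exactly that order --- reduce via the theorem, compute the resultants to pin down condition~(i), then quote the CNS classifications for $k=1,2$ to settle condition~(ii).
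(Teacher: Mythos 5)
Your proposal is correct and takes essentially the same route as the paper's proof: reduce via Theorem~\ref{ThmSim}, use $\Res(x-N_i,x-N_j)=N_i-N_j$ to see that the unit-ideal condition forces $k\le 2$ with $|N_1-N_2|=1$, and settle the Finite Expansion condition by CNS facts for $\prod_i(x-N_i)$. The only cosmetic differences are that the paper cites Myerson's Lemma~\ref{LemRes} (rather than Lemma~\ref{LemRes2}) for the unit-ideal criterion and gets $N_i\le -2$ from the general fact that a CNS polynomial is expanding with no positive real roots, quoting Gilbert's criterion only for sufficiency, whereas you invoke the linear and quadratic CNS classifications for both directions (where your parenthetical case check should also mention the pair $\{-2,-1\}$, excluded since $p=3>q=2$).
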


\begin{proof}
  By the Theorem, we need $\prod f_i$ to be a CNS polynomial. In particular,
  all $N_i$ must be $\le -2$, as a CNS polynomial must be expanding and
  cannot have positive real roots.

  Then the case $k=1$ is trivial; we assume $k\ge 2$.

  As for necessity, assume we have Finite Expansions; then the Theorem tells us
  that we have $(f_i,f_j)=(1)$ for all $i\ne j$. By Myerson's Lemma
  \ref{LemRes}, this is equivalent to having $|\Res(f_i,f_j)|=1$ for all
  $i\ne j$. Now $\Res(x-a,x-b)=a-b$ for any $a$ and $b$. One sees easily
  that the only possibility is to have $k=2$ and $|N_1-N_2|=1$.

  The sufficiency follows from the fact that $(x-N_1)(x-N_1+1)$, with
  $N_1\le -2$, is a CNS polynomial by Gilbert's criterion \cite{Gilbert:1981}.
  All conditions of the Theorem are satisfied, and we conclude that we have
  Finite Expansions.
\end{proof}

One notes that Definitions \ref{DefSim1} and \ref{DefSim2} quite restrictive,
in requiring that all elements of $V$ have a finite expansion.  If, for
example, we only want simultaneous expansions \emph{for the elements of $\E$},
then the equality $V=W$ from the proof of the Theorem is unnecessary, because
both $\E$ (represented as $\{ (a\bmod{f_1},\ldots,a\bmod{f_k}) : a \in \E \}$),
the base $X$, and the digits $\N$ are all contained in $W$. By Corollary
\ref{Cor111}, the problem is then translated directly to the question whether
all elements of $\E$ have a finite expansion under the digit system $(R,X,\R)$
(notations as in the proof of the Theorem).

There are open problems here: in case $E=\ZZ$, for example, it is unclear
whether the property that all elements of $\ZZ$ have a finite expansion in the
CNS digit system $(\ZZ[x]/(f),X,\{0,\ldots,|f(0)|-1\})$ is enough to imply that
$f$ is a CNS polynomial --- i.e., that \emph{all} elements of $\ZZ[x]/(f)$ have
a finite expansion. At the moment, I do not know any counterexample.
Algorithmically speaking, this property can be verified for a given $f$ by
applying Brunotte's witness criterion, where the starting set contains only
$\pm 1$, instead of a complete set of semigroup generators for $V$.

Sidestepping this problem for the moment, we see that the question of
classifying simultaneous number systems with \emph{classical digits} is also
related to the problem of determining whether the product of CNS polynomials is
again a CNS polynomial. This is a well-known open problem. It was proved
independently in \cite{Petho:2006} and \cite{Kane:2006} that the product of up
to $4$ linear CNS polynomials is again CNS; but \cite{Kane:2006} also gives an
example of a product of $9$ linear CNS polynomials that is not CNS.

An important difficulty here, however, is that the classical digit sets, when
projected down to a factor of the defining polynomial, will become redundant.
This means that it is possible for a polynomial $fg$ to be CNS, although 
neither $f$ nor $g$ is. I do not know of a concrete example here; it would be
interesting to find such examples.

At least, by \cite{Petho:2006} and \cite{Kane:2006}, we have the following
result.

\begin{theorem}
  Let $N_1,\ldots,N_k\in\ZZ$ be pairwise coprime and less than or equal to
  $-2$. Then all integers have a finite expansion in the simultaneous digit
  system defined by the $N_i$, whenever $(x-N_1)\cdots (x-N_k)$ is a CNS
  polynomial. In particular, if $k\le 4$, the conclusion always holds.
\end{theorem}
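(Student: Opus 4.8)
The plan is to deduce the statement from Theorem~\ref{ThmSim}, or more precisely from the weaker remark recorded after the proof of Corollary~\ref{CorSim}: the assertion here is \emph{not} that $(V,\phi,\N)$ is a simultaneous number system (which would force $V=W$, hence $(f_i,f_j)=(1)$ for all $i\ne j$, and is therefore very restrictive), but only that the diagonal copy of $\ZZ$ inside $V=\ZZ^k$ consists of elements admitting finite expansions; and this diagonal already lies inside the submodule $W$ appearing in the proof of Theorem~\ref{ThmSim}.

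First I would observe that the simultaneous digit system of Definition~\ref{DefSim1} is well defined under the hypotheses: since the $N_i$ are pairwise coprime, the constant coefficients $f_i(0)=-N_i$ of $f_i=x-N_i$ are pairwise coprime in $\ZZ$ (so the $N_i$ are in particular distinct), and $\R=\{0,1,\ldots,|N_1\cdots N_k|-1\}$ is a complete system of residues modulo $f_1(0)\cdots f_k(0)=(-1)^k N_1\cdots N_k$. Next, exactly as in the proof of Theorem~\ref{ThmSim}, every expansion $\sum_i\phi^i(n_i)$ with $n_i\in\N$ lies in the $\ZZ[x]$-submodule $W=\{(a\bmod f_1,\ldots,a\bmod f_k):a\in\ZZ[x]\}$ of $V$, because $\N\subseteq W$ and $W$ is closed under addition and under multiplication by $X$. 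By Corollary~\ref{Cor111} the map $\psi\colon R:=\ZZ[x]/(f_1\cdots f_k)\to R_1\times\cdots\times R_k$ is injective with image $W$ (injectivity because the $f_i$, being pairwise coprime in the factorial ring $\ZZ[x]$, satisfy $\bigcap_i (f_i)=(f_1\cdots f_k)$), it carries multiplication by $X$ on $R$ to $\phi$ on $W$, and it sends $\R$ onto $\N$ (a constant $c$ satisfies $c\bmod f_i=c$, so $\psi(c)=(c,\ldots,c)$). Hence $\psi$ restricts to an isomorphism of digit systems $(R,X,\R)\cong(W,\phi|_W,\N)$.

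The key step is then the equivalence: an integer $a$, viewed as $(a,a,\ldots,a)\in V$, has a finite expansion in $(V,\phi,\N)$ if and only if the image of $a$ in $R$ has a finite expansion in $(R,X,\R)$. Indeed, any such expansion of $(a,\ldots,a)$ lies in $W$ by the previous paragraph, and the isomorphism $\psi$ transports expansions back and forth. Now assume $(x-N_1)\cdots(x-N_k)=f_1\cdots f_k$ is a CNS polynomial; since $|f_1(0)\cdots f_k(0)|=|N_1\cdots N_k|$, the CNS digit system of that polynomial is exactly $(R,X,\R)$, which therefore has the Finite Expansion Property. So every element of $R$, in particular the image of every integer, has a finite expansion, and by the equivalence every integer has a finite simultaneous expansion. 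For the last sentence, each linear polynomial $x-N_i$ with $N_i\le -2$ is a CNS polynomial (the classical base-$N_i$ numeration), and by the theorem of Kane~\cite{Kane:2006} and Peth\H o~\cite{Petho:2006} a product of at most four linear CNS polynomials is again CNS; hence for $k\le 4$ the hypothesis is automatic.

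The only point requiring care is conceptual rather than computational: keeping straight that we want the weak ``every integer expands'' conclusion, so that the ring $W\cong R$ (rather than all of $V$) is the natural arena and no condition on the ideals $(f_i,f_j)$ beyond the standing coprimality of the $N_i$ is needed. Once this is seen, the argument is bookkeeping with Corollary~\ref{Cor111} and the cited product theorem for linear CNS polynomials.
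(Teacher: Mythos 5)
Your argument is correct and is essentially the paper's own (the paper only sketches it, in the remark preceding the theorem about dropping the condition $V=W$ when one merely wants expansions of elements of $\E$, plus the appeal to Kane and Peth\H{o} for the $k\le 4$ case). You have simply written out in full the identification of the diagonal with a subset of $W\cong\ZZ[x]/(f_1\cdots f_k)$ via Corollary~\ref{Cor111}, which is exactly the intended route.
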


\begin{example}
  Let $N_1=-2$, $N_2=-3$, and $N_3=-5$, with digit set $\{0,\ldots,29\}$. We
  consider the conditions for triples $(a_1,a_2,a_3)\in \bigoplus_i
  \ZZ[x]/(x-N_i)$ to be in the ``triple fibred product'' $W$: in fact, we need
  $a_i\equiv a_j\pmod{(x-N_i,x-N_j)}$ for $i\ne j$.
  
  We have $\Res(x+2,x+3)=-1$, hence $(x+2,x+3)=(1)$, so $a_1$ and $a_2$ can be
  independently chosen. But $\Res(x+2,x+5)=-3$ and $\Res(x+3,x+5)=-2$, so we
  need $a_1\equiv a_3\pmod{3}$ and $a_2\equiv a_3\pmod{2}$. We see that for 
  every choice of $a_1$ and $a_2$, the choice of $a_3$ is already determined
  modulo $6$.

  However, $(x+2)(x+3)(x+5)$ is a CNS polynomial by the results mentioned
  before, and we conclude that every integer has a unique simultaneous
  expansion modulo these three bases, with digits in $\{0,\ldots,29\}$.
  Furthermore, every triple $(a_1,a_2,a_3)$ satisfying the above conditions
  is expansible as well. For example, we have the nontrivial cycle
  $$
    (1,1,6) \overset{1}{\rightarrow} (0,0,-1) \overset{24}{\rightarrow}
    (12,8,5) \overset{20}{\rightarrow} (4,4,3) \overset{28}{\rightarrow}
    (12,8,5) \overset{20}{\rightarrow} \ldots,
  $$
  showing that $(1,1,6)$ is not expansible, whereas
  $$
    (1,1,7) \overset{7}{\rightarrow} \left( \frac{1-7}{-2},\frac{1-7}{-3},
    \frac{7-7}{-5}\right) = (3,2,0) \overset{5}{\rightarrow} (1,1,1)
    \overset{1} {\rightarrow} (0,0,0).
  $$
  Note that expansions will not be equal to the usual $30$-ary (or the
  less usual $(-30)$-ary) expansions; for example, the expansion length of
  integers $a$ will be proportional to $\log_2 a$, instead of $\log_{30} a$.
\end{example}

Interesting simultaneous number systems (in the strict sense, where we want
all elements of $V$ to be representable) with more than $2$ components can be
constructed if we allow the defining polynomials to be nonlinear. An infinite
family of quadratic triples that give rise to simultaneous number systems is
given in the next result.

\begin{theorem}
  Let $a\in\ZZ$ with $a\le -7$, let $f_a=(x-a)(x-a-1)-1$, $g_a=f_a+x-a-1$, and
  $h_a=f_a+x-a-2$. Let $\R = \{0,1,\ldots,|f_a(0)g_a(0)h_a(0)|-1\}$.
  Then $f_a$, $g_a$, and $h_a$ are irreducible and coprime, and together with
  $\R$ define a simultaneous number system.
\end{theorem}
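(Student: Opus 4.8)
The plan is to verify the two hypotheses of Theorem~\ref{ThmSim} with $\E=\ZZ$ and $k=3$. It is convenient to pass to the variable $y=x-a$, in which
\[
  f_a=y^2-y-1,\qquad g_a=y^2-2,\qquad h_a=y^2-3 .
\]
Each of these is a monic quadratic whose discriminant ($5$, $8$, $12$, respectively) is not a perfect square, so each is irreducible over $\QQ$ and hence, being primitive and monic, over $\ZZ$. Since the resultant is invariant under the translation $y=x-a$, the pairwise resultants can be computed in the $y$-variable by evaluating one factor at the roots of the other, giving $\Res(f_a,g_a)=(1-\sqrt2)(1+\sqrt2)=-1$, $\Res(f_a,h_a)=(2-\sqrt3)(2+\sqrt3)=1$, and $\Res(g_a,h_a)=1$. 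As all three leading coefficients equal $1$, Lemma~\ref{LemRes} then yields $(f_a,g_a)=(f_a,h_a)=(g_a,h_a)=(1)$ in $\ZZ[x]$, which is condition~(i) of Theorem~\ref{ThmSim}. The same computation shows the three polynomials are pairwise coprime and (evaluating a B\'ezout relation at $x=0$) that $f_a(0),g_a(0),h_a(0)$ are pairwise coprime, so the simultaneous digit system is well defined, with $\R$ a complete residue system modulo $f_a(0)g_a(0)h_a(0)$.

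It then remains to establish condition~(ii): that $F:=f_ag_ah_a$ is a CNS polynomial. Here I would invoke the classical monotone-coefficient criterion for canonical number systems (B.~Kov\'acs; see also \cite[Section~3.1]{BaratBertheLiardetThuswaldner:2006}): a monic $P\in\ZZ[x]$ all of whose lower coefficients $c_0,\dots,c_{n-1}$ are positive and satisfy $c_0\ge c_1\ge\dots\ge c_{n-1}$, with $c_0\ge2$, is CNS. In the $y$-variable the six roots of $F$ are $\varphi$ and $\bar\varphi$ (the roots of $y^2-y-1$) together with $\pm\sqrt2$ and $\pm\sqrt3$, each of absolute value less than $2$. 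Writing $b=-a\ge7$ and, for the six roots $r_i$, setting $t_i=b-r_i\ge b-2\ge5>0$, we get $F(x)=\prod_{i=1}^{6}(x+t_i)$, so the coefficients of $F$ as a polynomial in $x$ are the elementary symmetric functions $e_k=e_k(t_1,\dots,t_6)>0$, with $c_0=e_6$, $c_1=e_5$, \dots, $c_5=e_1$. Since $e_6\ge2$ is obvious, everything reduces to the chain $e_1\le e_2\le\dots\le e_6$.

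This chain, and in particular its last link, is the crux, and it is what pins down the threshold $a\le-7$. For $1\le k\le4$ the step $e_k\le e_{k+1}$ is easy: from $(k+1)e_{k+1}=\sum_{|T|=k}\bigl(\prod_{i\in T}t_i\bigr)\bigl(\sum_{j\notin T}t_j\bigr)$ and $\sum_{j\notin T}t_j\ge(6-k)\min_i t_i\ge5(6-k)$ one gets $e_{k+1}\ge\frac{5(6-k)}{k+1}\,e_k\ge e_k$. The delicate step is $e_5\le e_6$: since $e_5=e_6\sum_i 1/t_i$, it amounts to $\sum_i 1/t_i\le1$, and pairing the conjugate roots one finds
\[
  \sum_{i=1}^{6}\frac1{t_i}
  =\frac{2b-1}{b^2-b-1}+\frac{2b}{b^2-2}+\frac{2b}{b^2-3}.
\]
Each summand is decreasing in $b$, so it suffices to treat $b=7$, where the sum equals $\tfrac{13}{41}+\tfrac{14}{47}+\tfrac7{23}<1$; for $b=6$ the same expression already exceeds $1$, which is exactly why $a=-7$ is the boundary case. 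Rigorously, for $b\ge7$ one verifies termwise bounds such as $\frac{2b}{b^2-3}\le\tfrac7{23}$ (equivalent to $7b^2-46b-21\ge0$), and analogously for the other two terms. Granting $e_1\le\dots\le e_6$, Kov\'acs's criterion shows $F$ is a CNS polynomial, condition~(ii) of Theorem~\ref{ThmSim} holds, and we conclude that $f_a$, $g_a$, $h_a$ together with $\R$ define a simultaneous number system. The main obstacle is thus the inequality $e_5\le e_6$, i.e.\ $\sum_i 1/t_i\le1$; everything else is routine bookkeeping.
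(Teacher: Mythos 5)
Your proposal is correct, and the overall strategy is the same as the paper's: reduce to the two conditions of Theorem~\ref{ThmSim}, check pairwise unit resultants via Lemma~\ref{LemRes}, and then show that $F=f_ag_ah_a$ is CNS by the monotone-coefficient criterion. The differences are in execution, and they are worth noting. For the resultants and irreducibility, the paper reduces $g_a,h_a$ modulo $f_a$ to linear polynomials and evaluates ($\Res(f_a,g_a)=f_a(a+1)=-1$, etc.), and proves irreducibility by observing that $f_a$ takes the values $\pm1$ at four consecutive integers; your translation to $y=x-a$, giving $y^2-y-1$, $y^2-2$, $y^2-3$, and a discriminant argument, is an equivalent and if anything more transparent route to the same numbers. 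The genuine divergence is in the CNS step: the paper bounds each coefficient crudely by $\binom{6}{i}(|a|-2)^i<c_i<\binom{6}{i}(|a|+2)^i$, which only yields monotonicity for $a\le-20$, and then disposes of $-19\le a\le-7$ by explicit computation of the coefficients. Your argument via elementary symmetric functions of the shifted roots $t_i=b-r_i$ is uniform in $a\le-7$: the steps $e_k\le e_{k+1}$ for $k\le4$ follow from the cheap bound $e_{k+1}\ge\frac{5(6-k)}{k+1}e_k$, and the genuinely tight step $e_5\le e_6$ is reduced to $\sum_i1/t_i\le1$, which is monotone in $b$ and checked exactly at the boundary $b=7$ (where, pleasingly, $7b^2-46b-21$ vanishes, and $b=6$ fails --- consistent with the paper's remark that the coefficients are no longer monotone for $-6\le a\le-3$). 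This eliminates the paper's residual case analysis and isolates precisely why $a\le-7$ is the threshold for this method. Both arguments invoke the same sufficient criterion (Kov\'acs / Gilbert's Proposition~7), so the conclusions agree; yours simply replaces computation by a sharper uniform estimate.
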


\begin{proof}
  Because $h_a=g_a-1$, we clearly have $\Res(g_a,h_a)=1$. Next, using
  properties of the resultant, we compute
  \begin{align*}
    \Res(f_a,g_a) &= \Res(f_a,x-a-1) = f_a(a+1) = -1; \\
    \Res(f_a,h_a) &= \Res(f_a,x-a-2) = f_a(a+2) = 1.
  \end{align*}
  Thus, by Theorem \ref{ThmSim} and Lemma \ref{LemRes}, it is enough to prove
  that $f=f_ag_ah_a$ is a CNS polynomial.

  Now because $f_a(a-1)=f_a(a+2)=1$ and $f_a(a)=f_a(a+1)=-1$, we conclude that
  $f_a$ is irreducible, and the same reasoning works for $g_a$ and $h_a$.
  The same argument shows that all three have their (real) zeros in the
  interval $(a-2,a+2)$. We now assume $a\le -3$, so that $f$ is expanding and
  has only negative real roots. Write $f=\sum_{i=0}^6 c_ix^i$; we have, for
  $|a|\ge 2$ and $i=0,\ldots,6$,
  $$
    \binom{6}{i}(|a|-2)^i < c_i < \binom{6}{i} (|a|+2)^i.
  $$
  This allows us to conclude that $c_{i-1}<c_i$ for $i=1,\ldots,6$, as soon as
  $a\le -20$, so that $f$ is a CNS polynomial by a well-known criterion (for
  example, this follows from Proposition 7 of \cite{Gilbert:1981}). By explicit
  calculation of the coefficients, one obtains the same result for $-19\le a\le
  -7$.
\end{proof}

For $-6\le a\le -3$, the polynomial $f$ obtained in the proof of the Corollary
no longer has strictly increasing coefficients, and only rather heavy
computation can tell us whether it is a CNS polynomial. The case $a=-3$ gives
at once the smallest example of three monic expanding quadratic polynomials
having pairwise resultant $\pm 1$: we have $(f_{-3}, g_{-3},
h_{-3})=(x^2+5x+5,\, x^2+6x+6,\, x^2+6x+7)$.

Unfortunately, the product
$$
  f_{-3}g_{-3}h_{-3}=x^6 + 17x^5 + 114x^4 + 383x^3 + 677x^2 + 600x + 210
$$
is not a CNS polynomial: using Brunotte's witness set criterion
\cite[Lemma~2]{Brunotte:2001}, one computes a witness set of $153807$ elements
that contains a $14$-cycle starting in $x^4 + 16x^3 + 98x^2 + 285x + 392$,
showing that the periodic set of this digit system contains nonzero elements.
However, the corresponding products for $a=-4,-5,-6$ do turn out to be CNS
polynomials, with much smaller witness sets. One notes that $f_{-3}$, $g_{-3}$,
and $h_{-3}$ all have a real root between $-2$ and $-1$, whereas for $a\le -4$,
all roots of the involved polynomials are real and less than $-2$. This
corresponds with the general observation that problems about number systems
become easier when all conjugates of the base are greater than $2$ in modulus.

The same proof shows that the triples $(f_a,\,x-a-1,\,x-a-2)$ of one quadratic
and two linear polynomials, for $a$ negative and large enough, also generate
simultaneous number systems.

One also obtains infinite families of pairwise-resultant-$1$-triples by taking
$g_a=f_a+x-a$ or $f_a+x-a+1$, with $h_a=g_a-1$. However, these choices for
$g_a$ are reducible: one has $f_a+x-a=(x-a-1)(x-a+1)$ and $f_a+x-a+1=(x-a)^2$.
They are expanding, so one can use them all the same for defining digit
systems; to complete the argument, one needs to show that their product is a
CNS polynomial. In fact, if $a$ is negative and large enough, this will ensue
automatically; the proof is the same as above.

Finally, one is led to the question of characterising all sets of integral
polynomials which have pairwise resultant $\pm 1$. If we take $v$ monic
polynomials of degree $d$, and take all non-leading coefficients as variables,
this leads to an algebraic variety in $vd$-dimensional affine space, cut out by
$\binom{v}{2}$ resultant equations. It follows by dimension considerations that
when $v\ge 2d+1$, we cannot expect any solutions, unless the intersection is
incomplete. Experimentally, it is easy to find quadruples of monic cubic
polynomials having pairwise resultant $\pm 1$; there are no monic cubic
quintuples with coefficients in $\{-3,\ldots,3\}$.

A set of irreducible monic nonconstant polynomials having pairwise resultant
$1$ (when pairs are chosen in the given order) is given by
$$
  \{ x-1,\: x,\: x^2-x+1,\: x^3-x+1,\: x^4-x^3+x^2-x+1,\: 
    x^5-2x^3+3x^2-2x+1 \}.
$$
It can be proved using rather extensive geometric computations that this set
is maximal among polynomials of degree at most $5$. Reducibility and resultants
do not change when we substitute $x-a$ for $x$ (with $a\in\ZZ$), so when we
take $a$ negative and large enough, the product of the six shifted polynomials
will have monotonically increasing coefficients (as above) and hence become a
CNS polynomial, and we obtain infinitely many simultaneous number systems.
  
\end{section}

\def\polhk#1{\setbox0=\hbox{#1}{\ooalign{\hidewidth
  \lower1.5ex\hbox{`}\hidewidth\crcr\unhbox0}}} \def\cprime{$'$}
  \hyphenation{ma-the-ma-ti-ques}

\end{document}